\documentclass[11pt]{article}

\usepackage[utf8]{inputenc}
\usepackage{algorithm}
\usepackage{algorithmic}
\usepackage{amsfonts}
\usepackage{amsmath,amssymb}
\usepackage{amsthm} 
\usepackage{booktabs}
\usepackage{color}
\usepackage{enumerate}
\usepackage{graphicx}
\usepackage{latexsym}
\usepackage{longtable}
\usepackage{mathtools}
\usepackage{multirow}
\usepackage{setspace}
\usepackage{subfigure}
\usepackage{url}

\definecolor{myred}{RGB}{255,50,50}         
\definecolor{myblack}{RGB}{0,0,0}           
\newcommand{\red}[1]{\textcolor{myblack}{#1}}



\usepackage[
  backend=biber,
  firstinits=true,
  style=ieee,
  citestyle=numeric-comp,
  sorting=nyt]{biblatex}

\addbibresource{journal-titles.bib}
\addbibresource{references.bib}


\oddsidemargin=21pt
\evensidemargin=21pt   
\textwidth=435pt
\textheight=580pt
\topmargin=0pt
\headsep=15pt


\newtheorem{theorem}{Theorem}[section]
\newtheorem{lemma}[theorem]{Lemma}
\newtheorem{definition}[theorem]{Definition}
\newtheorem{corollary}[theorem]{Corollary}
\newtheorem{proposition}[theorem]{Proposition}


\numberwithin{equation}{section}


\def\usebfsetcapital{\def\setcapital##1{\mathbf{##1}}}

\usebfsetcapital

\def\setR{\mathbb{R}}

\newcommand\condition[1]{\quad \text{#1}}
\newcommand\forallcondition[1]{\condition{for all~$#1$}}

\DeclareMathOperator*{\argmax}{argmax}
\DeclareMathOperator*{\argmin}{argmin}

\DeclareMathOperator{\dom}{dom}

\DeclarePairedDelimiter{\norm}{\lVert}{\rVert}

\DeclarePairedDelimiter{\set}{\lbrace}{\rbrace}
\DeclarePairedDelimiterX{\Set}[2]{\lbrace}{\rbrace}{#1\mathrel{}\delimsize\vert\mathrel{}#2}
\DeclarePairedDelimiterX{\innerp}[2]{\langle}{\rangle}{#1, #2}
\newcommand{\T}{\top\hspace{-1pt}}

{}

\newcommand{\inner}[2]{\langle#1,#2\rangle} 

\begin{document}


\title{Monotonicity for Multiobjective Accelerated\\ Proximal Gradient Methods%
  \thanks{This work was supported by the Grant-in-Aid for Scientific
    Research (C) (19K11840 and 21K11769) from Japan Society for the
    Promotion of Science.}
}
\author{
  Yuki Nishimura$^\dag$
  \and 
  Ellen H. Fukuda$^\dag$
  \and
  Nobuo Yamashita%
  \thanks{Department of Applied Mathematics and Physics, Graduate
    School of Informatics, Kyoto University, Kyoto \mbox{606--8501}, Japan
    (\texttt{yuki.nishimura@amp.i.kyoto-u.ac.jp, ellen@i.kyoto-u.ac.jp,
      nobuo@i.kyoto-u.ac.jp}).}
}

\maketitle


\begin{abstract}
  \noindent  Accelerated proximal gradient methods, which are also called fast
  iterative shrinkage-thres\-holding algorithms (FISTA) are known to
  be efficient for many applications. Recently, Tanabe et al. proposed
  an extension of FISTA for multiobjective optimization
  problems. However, similarly to the single-objective minimization
  case, the objective functions values may increase in some
  iterations, and inexact computations of subproblems can also lead to
  divergence. Motivated by this, here we propose a variant of the
  FISTA for multiobjective optimization, that imposes some
  monotonicity of the objective functions values. In the
  single-objective case, we retrieve the so-called MFISTA, proposed by
  Beck and Teboulle. We also prove that our method has global
  convergence with rate $O(1/k^2)$, \red{where $k$ is the number of
    iterations,} and show some numerical advantages in requiring
  monotonicity.\\

  \noindent \textbf{Keywords:} Multiobjective descent methods,
  proximal gradient methods, accelerated methods, Pareto optimality,
  first-order methods.
\end{abstract}


In this paper, we consider the following multiobjective optimization
problem:
\begin{equation}
  \label{eq: MOP}
  \tag{MOP}
  \begin{aligned}
    \min & \quad F(x) \\
    \mbox{s.t.} & \quad x \in \setR^n,
  \end{aligned}
\end{equation}
with $F \coloneqq (F_1, \ldots, F_m)^\T$ being a vector-valued
function defined by
\[
F_i(x) \coloneqq f_i(x) + g_i(x), \quad i = 1,\ldots,m,
\]
where $\T$ denotes transpose, $f_i \colon \setR^n \to \setR$ is convex
and continuously differentiable, and $g_i \colon \setR^n \to
(-\infty,\infty]$ is closed, proper and convex. Some applications for
  \eqref{eq: MOP} include problems in image processing area, robust
  optimization and machine learning \red{(see some examples
  in~\cite{BMTB20,FW14,GZY16,Sta88})}.  In particular, if each $g_i$ is
  the indicator function of a same set, then the problem \eqref{eq:
    MOP} is equivalent to a constrained optimization
  problem~\cite{TFY19}.

As it is well known, the simultaneous minimization of multiple
objectives is done by using the concept of Pareto optimality. In this
work, we are particularly interested in solving~\eqref{eq: MOP} in the
Pareto sense, and by using a multiobjective descent
method~\cite{FGD14}. The research associated to multiobjective descent
methods is currently increasing, as alternatives to
metaheuristics~\cite{JMT02}, where no theoretical guarantee of
convergence exists, and also scalarization
approaches~\cite{Jah84,Luc87}, where unknown parameters are
required. Many of these methods have been proposed in the literature,
including for instance, the steepest descent~\cite{FS00}, the
Newton~\cite{FGDS09}, the projected gradient~\cite{FGD11,GDI04}, the
proximal point~\cite{BIS05}, the subgradient~\cite{CNSFL}, and the
conjugate gradient~\cite{LPP18} methods.

Recently, Tanabe et al. proposed the so-called multiobjective
\emph{proximal gradient method} (PGM), that is specific for composite
problems like~\eqref{eq: MOP} \cite{TFY19}. Afterwards, an accelerated
version of it was also proposed~\cite{TFY22b}. Such methods are
basically extensions of the ones proposed for single-valued
optimization, given in~\cite{FM81,BT09a}. In particular, the
accelerated version of PGM, which is often called \emph{fast iterative
shrinkage-thresholding algorithm} (FISTA), is used in many
applications of signal processing area. \red{While many enhancements
on the scalar-valued FISTA had been proposed, most of their
multiobjective counterparts had not; currently, only the original
FISTA had been extended to the vector-valued
context}~\cite{TFY22b}. Here, we fill this gap, by proposing a variant
of the multiobjective~FISTA.

More specifically, the multiobjective FISTA does not guarantee
monotonically decrease of the objective functions
values~\cite{TFY22b}. \red{This property is inherited from the
  single-objective FISTA.} However, as it was demonstrated
in~\cite{BT09b}, for some problems, inexact computations of
subproblems may lead to a high nonmonotonicity, and even
divergence. To overcome \red{the above} drawback, a monotone version
of FISTA, called MFISTA, was proposed for scalar-valued
problems~\cite{BT09b}. Based on \red{that} work, here we propose a
multiobjective version of MFISTA. \red{To this end}, we first clarify
the notion of monotonicity in the multiobjective case. More precisely,
in each iteration, we consider either the case that all objectives
decrease, or when at least one objective function decreases. For the
latter case, we show that the method converges globally with rate
$O(1/k^2)$, \red{where $k$ is the number of iterations}, which is the
same rate of the multiobjective FISTA.

The paper is organized as follows. In Section~\ref{sec:preliminaries},
we give some notations, and remark the multiobjective proximal
gradient methods, with and without acceleration. We propose our
monotone version of FISTA in Section~\ref{sec:monotone_fista}, and its
convergence analysis is given in Section~\ref{sec:convergence}. Moreover,
in Section~\ref{sec:experiments} we show some numerical
experiments. We conclude the paper in Section~\ref{sec:conclusions} with
some final remarks.


\section{Preliminaries}
\label{sec:preliminaries}

Let us first present some basic notations that will be used throughout
the paper. For given vectors $x, y \in \setR^s$, we write $x \le y$
(resp. $x < y$) if $x_i \le y_i$ (resp. $x_i < y_i$) for all
$i=1,\dots,s$. The Euclidean norm and inner product are denoted by
$\norm{\,\cdot\,}$, and $\inner{\cdot}{\cdot}$, respectively. For a
given matrix $A \in \setR^{r \times s}$, its transpose is given by
$A^\top \in \setR^{s \times r}$. The gradient and the subdifferential
of a function $\phi\colon\setR^s \to (-\infty, \infty]$ at $x \in
  \setR^s$ are written as $\nabla \phi(x)$, and $\partial \phi(x)$,
  respectively.

Let us now return to problem~\eqref{eq: MOP}. In the whole paper, we
assume that each $f_i$ has a Lipschitz continuous gradient with
constant $L_i > 0$. Since we deal with various objective functions, we
also define
\begin{equation}
  \label{eq: lipchitz}
  L \coloneqq \max_{i \in \{1,\ldots, m\}} L_i.
\end{equation}
We also denote the domain of the objective function~$F$ as follows:
\begin{equation}
  \text{dom} F \coloneqq \{x \in \setR^n \mid F (x) < \infty\}.
\end{equation}

As it is well known, \red{in general, we may not have a} point that
minimizes all objectives at once. Thus, \eqref{eq: MOP} is solved in
the Pareto sense. Recall that if there is no $\red{x} \in \setR^n$
such that $F(x) \leq F(x^*)$ and $F(\red{x}) \neq F(x^*)$, then $x^*$
is a (strongly) Pareto optimal point. Moreover, if there is no
$\red{x} \in \setR^n$ such that $F(x) < F(x^*)$, then $x^*$ is called
weakly Pareto optimal point. In this work, we are particularly
interested in the this latter optimality concept.

Let us now review the multiobjective FISTA, proposed in~\cite{TFY22b},
which is the base of our work. As \red{do most descent methods, this
  one updates a solution in an iterative manner}, and at each
iteration the following subproblem is solved, with given $x,y \in
\setR^n$:
\begin{equation}
  \label{prob: placc}
  \min_{z \in \setR^n} \max_{i \in \{1,\ldots,m\}}
  \Big( \innerp*{\nabla f_i(y)}{z-y} + g_i(z) + f_i(y) - F_i(x) \Big)
  + \frac{\ell}{2} \norm*{z-y}^2,
\end{equation}
where $\ell \ge L$. Besides the quadratic regularization term, for
each $i$, it basically considers the first-order approximation of the
objective, and additional \red{terms that are} important for the
acceleration process. In particular, when $m=1$, this is equivalent to
the subproblem used in the single-objective FISTA~\cite{BT09a}.
Moreover, the subproblem~\eqref{prob: placc} has a unique optimal
solution, because its objective function is strongly convex. Thus,
denote such solution as
\begin{equation}
  \label{eq: subp_sol}
  d_\ell(x,y) := \argmin_{z \in \setR^n} \max_{i \in \{1,\ldots,m\}}
  \Big( \innerp*{\nabla f_i(y)}{z-y} + g_i(z) + f_i(y) - F_i(x) \Big)
  + \frac{\ell}{2} \norm*{z-y}^2.
\end{equation}
  
Furthermore, consider the merit function below, proposed
in~\cite{TFY20}:
\begin{align}
  u_0(x) &\coloneqq \sup_{z\in \setR^n} \min_{i \in \{1,\ldots,m\}}
  \Big( F_i(x)-F_i(z) \Big).
  \label{def: merit}
\end{align}
Note that when $m=1$, $u_0(x)$ just measures the distance between
$F(x)$ and the \red{optimal objective} value, which means that it is
the simplest merit function for single-objective problems. The
following result shows that the weak Pareto optimality can be
characterized in terms of the solution of the subproblem and the above
merit function.

\begin{proposition}
  \label{prop: merit}
  Let $d_\ell$ and $u_0$ be defined by~\eqref{eq: subp_sol}
  and~\eqref{def: merit}, respectively. Then, the following conditions
  are equivalent:
  \begin{itemize}
  \item[(a)] $y \in \setR^n$ is weakly Pareto optimal for~\eqref{eq: MOP},
  \item[(b)] $d_\ell(x,y) = y$ for some $x \in \setR^n$,
  \item[(c)] $u_0(y) = 0$.
  \end{itemize}
  Moreover, $u_0(x) \ge 0$ for all $x \in \setR^n$ and the function
  $d_\ell$ is continuous.
\end{proposition}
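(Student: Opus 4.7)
My plan would be to prove the nonnegativity claim first, then handle the chain of equivalences (a)$\Leftrightarrow$(c) and (a)$\Leftrightarrow$(b), and finally deal with the continuity of $d_\ell$.

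For $u_0(y)\ge 0$, the sup in the definition is at least the value at $z=y$, which is $\min_i(F_i(y)-F_i(y))=0$. The equivalence (a)$\Leftrightarrow$(c) is then essentially a logical reformulation: $y$ is weakly Pareto optimal iff for every $z$ there exists $i$ with $F_i(z)\ge F_i(y)$, iff $\min_i(F_i(y)-F_i(z))\le 0$ for every $z$, iff $u_0(y)\le 0$; combined with $u_0(y)\ge 0$ this gives $u_0(y)=0$, and the converse direction works by contrapositive, exhibiting an $\hat z$ with $F(\hat z)<F(y)$ that forces $u_0(y)>0$.

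For (a)$\Rightarrow$(b) I would take $x=y$ and argue by contradiction: if $d_\ell(y,y)\ne y$, there would exist $\hat z$ at which the subproblem objective is strictly negative (the value at $z=y$ is $0$), so for every $i$ one would have $\innerp*{\nabla f_i(y)}{\hat z-y}+g_i(\hat z)-g_i(y)+\tfrac{\ell}{2}\|\hat z-y\|^2<0$. The descent lemma, using $\ell\ge L\ge L_i$, then gives $f_i(\hat z)\le f_i(y)+\innerp*{\nabla f_i(y)}{\hat z-y}+\tfrac{\ell}{2}\|\hat z-y\|^2$, and adding $g_i(\hat z)$ to both sides yields $F_i(\hat z)<F_i(y)$ for every $i$, contradicting weak Pareto optimality. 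For the reverse direction (b)$\Rightarrow$(a), assume $d_\ell(x,y)=y$ but $y$ is not weakly Pareto, so some $\hat z$ satisfies $F_i(\hat z)<F_i(y)$ for all $i$. Taking $z_t=y+t(\hat z-y)$ and using convexity of $f_i$ and $g_i$, each $i$th term inside the $\max$ is bounded above by $F_i(y)-F_i(x)-t\,\varepsilon_i$ with $\varepsilon_i=F_i(y)-F_i(\hat z)>0$; setting $\varepsilon=\min_i\varepsilon_i$, the subproblem value at $z_t$ is at most $\max_i(F_i(y)-F_i(x))-t\varepsilon+\tfrac{\ell}{2}t^2\|\hat z-y\|^2$, which for small $t>0$ is strictly less than the value at $z=y$, contradicting the optimality of $y$.

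For continuity of $d_\ell$, the plan is to exploit the joint continuity of the subproblem objective in $(x,y,z)$ together with its $\ell$-strong convexity in $z$. If $(x_n,y_n)\to(x,y)$, the strong convexity inequality $\phi(z;x_n,y_n)\ge \phi(d_\ell(x_n,y_n);x_n,y_n)+\tfrac{\ell}{2}\|z-d_\ell(x_n,y_n)\|^2$ applied at $z=d_\ell(x,y)$, combined with joint continuity, forces $d_\ell(x_n,y_n)\to d_\ell(x,y)$; this is the standard Berge-type argument for the parametric minimizer of a uniformly strongly convex family.

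I expect the main obstacle to be the (a)$\Rightarrow$(b) direction, because its proof crucially needs the descent lemma and the hypothesis $\ell\ge L$, whereas the rest is essentially bookkeeping with convexity. The continuity claim looks routine but requires a little care with the lsc functions $g_i$ entering through the subproblem; a clean way is to reduce to the strong-convexity bound above rather than invoking Berge's theorem in full generality.
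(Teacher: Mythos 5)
First, note that the paper does not actually prove this proposition: its ``proof'' is a pointer to \cite[Theorem~3.1]{TFY20} and \cite[Proposition~4.1]{TFY22b}, so any self-contained argument already goes beyond what the paper offers. Your treatment of the nonnegativity of $u_0$, of (a)$\Leftrightarrow$(c), and of (a)$\Leftrightarrow$(b) is correct and is essentially the standard argument from those references: taking $z=x$ gives $u_0(x)\ge 0$; the equivalence with weak Pareto optimality is a logical reformulation once nonnegativity is in hand; (a)$\Rightarrow$(b) uses the descent lemma with $\ell\ge L$ after observing that the subproblem value at $z=y$ (with $x=y$) equals $0$, so a minimizer different from $y$ would force $F_i(\hat z)<F_i(y)$ for every $i$; and (b)$\Rightarrow$(a) perturbs along the segment $y+t(\hat z-y)$ and uses convexity of $f_i$ and $g_i$ to beat the quadratic term for small $t$. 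All of these steps check out.

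The one genuine gap is in the continuity of $d_\ell$. Writing $\varphi(z;x,y)$ for the subproblem objective in~\eqref{prob: placc}, the strong-convexity inequality gives
\begin{equation*}
\frac{\ell}{2}\norm*{d_\ell(x_n,y_n)-d_\ell(x,y)}^2 \le \varphi\big(d_\ell(x,y);x_n,y_n\big)-\varphi\big(d_\ell(x_n,y_n);x_n,y_n\big),
\end{equation*}
and to send the right-hand side to zero you need \emph{two} one-sided limits: $\limsup_n \varphi(d_\ell(x,y);x_n,y_n)\le\varphi(d_\ell(x,y);x,y)$, which holds because $-F_i(x)$ is upper semicontinuous, and $\liminf_n \min_z\varphi(z;x_n,y_n)\ge\min_z\varphi(z;x,y)$, which is lower semicontinuity of the optimal-value function. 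The second is the nontrivial direction and is not delivered by the displayed bound; the ``joint continuity'' you invoke fails in general because $g_i(z)$ is only lsc in $z$ while $-F_i(x)$ is only usc in $x$. More fundamentally, $d_\ell(x,y)$ depends on $x$ only through the values $F_i(x)$, which are merely lsc, so continuity in the first argument cannot hold on all of $\setR^n$ without restricting to a set on which $F$ is continuous. This is precisely the technical content the paper outsources to \cite[Proposition~4.1]{TFY22b}; your sketch would need either such a domain restriction or an explicit argument for the lower semicontinuity of the value function to be complete.
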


\begin{proof}
  See~\cite[Theorem~3.1]{TFY20} and~\cite[Proposition~4.1]{TFY22b}.
\end{proof}

We end this section by stating below the algorithm proposed
in~\cite{TFY22b}. As we can see in~\cite[Theorem~5.2]{TFY22b}, this
method converges globally with rate $O(1/k^2)$, retrieving the rate of
the single-objective FISTA~\cite[Theorem~4.4]{BT09a}.

\begin{algorithm}[hbtp]
  \caption{Multiobjective FISTA}
  \label{alg: acc-pgm}
  \begin{algorithmic}[1]
    \REQUIRE $y^1 = x^0 \in \text{dom} F,\ t_1 = 1,\ \ell \ge L,\ \varepsilon > 0,\ k := 1$.
    \ENSURE $x^\ast$: weakly Pareto optimal solution
    \WHILE{$\norm*{d_\ell(x^{k - 1}, y^k) - y^k} \ge \varepsilon$} \vspace{3pt}
    \STATE $x^k := d_\ell(x^{k - 1}, y^k)$ \vspace{3pt}
    \STATE $t_{k+1} := \Big( 1+\sqrt{1 + 4t_k^2} \, \Big) / 2$ \vspace{3pt}
    \STATE $y^{k + 1} := x^k + \displaystyle{\left(\frac{t_k-1}{t_{k+1}}\right) (x^k - x^{k - 1})}$ 
    \STATE $k := k + 1$
    \ENDWHILE
  \end{algorithmic}
\end{algorithm}

Moreover, differently from the non-accelerated proximal gradient
method \cite[Sections~3.1 and 3.2]{TFY19}, we do not necessarily have
$F_i(x^k) \le F_i(x^{k-1})$ for each $i=1,\dots,m$ in (multiobjective)
FISTA. For some instances of problems, and depending on the precision
for the computation of the subproblems, the lack of monotonicity may
lead to divergence. \red{To resolve this issue}, in the next section,
we propose multiobjective FISTA algorithms with some monotonicity
properties, that can be seem as extensions of the single-objective
MFISTA~\cite{BT09b}.


\section{The proposed method}
\label{sec:monotone_fista}

Here, we establish the concept \red{of} monotonicity in multiobjective
optimization, and then propose two multiobjective FISTA algorithms
with monotonicity.

\begin{definition}
  \label{def: monotone}
  Let $\set{x^k}$ be a sequence generated by an algorithm applied for
  problem~\eqref{eq: MOP}. For a given iterate~$k$,
  \begin{itemize}
  \item[(a)] If $\displaystyle{\min_{i \in \{1,\ldots,m\}}
    \Big(F_i(x^{k-1})-F_i(x^k) \Big)\geq 0}$,
    then we say that $F$ is strongly decreasing.
  \item[(b)] If $\displaystyle{\max_{i \in \{1,\ldots,m\}}
    \Big(F_i(x^{k-1})-F_i(x^k) \Big)\geq 0}$,
    then we say that $F$ is weakly decreasing.
  \end{itemize}
\end{definition}

From the definition (a), strongly decreasing $F$ at iteration $k$
means basically that $F_i(x^{k-1}) \ge F_i(x^k)$ for all $i$, i.e.,
all objective functions values are not increasing in that
iteration. If such condition holds in all iterations, we can say that
the method generates a sequence with monotonically decreasing
functional values. \red{We can also impose a stronger condition by
using strict inequality in the definition (a) (which can be implied
by the existence of the so-called admissible curve~\cite{Sma73}).}
However, one can think that \red{(a) and its strict version are too
  strong conditions}. For this reason, we also consider the
monotonicity defined in~(b). In such case, $F$ is weakly decreasing if
there exists some~$j$ such that $F_j(x^{k-1}) \ge F_j(x^k)$, or in
other words, at least one objective function does not increase in that
iteration. However, since such~$j$ may not be the same for all
iterations, it is not necessarily true that some objective function
decreases monotonically. In the subsequent analysis, we will see that
the condition (b) allows global convergence of the method, and for
such reason we state below the algorithm using only such condition.

\begin{algorithm}[hbtp]
  \caption{Multiobjective MFISTA}
  \label{alg: mfista}
  \begin{algorithmic}[1]
    \REQUIRE Set $y^1 = x^0 \in \dom F,\ t_1 = 1,\ \ell \ge L,\ \varepsilon > 0,\ k := 1$.
    \ENSURE $x^\ast$: weakly Pareto optimal solution
    \WHILE{$\norm*{d_\ell(x^{k - 1}, y^k) - y^k} \ge \varepsilon$} \vspace{3pt}
    \STATE $z^k := d_\ell(x^{k - 1}, y^k)$ \vspace{3pt}
    \IF{$\displaystyle \max_{i \in \{1,\ldots,m\}}\Big(F_i(x^{k-1})-F_i(z^k) \Big)\geq 0$,} 
    \STATE $x^k := z^k$
    \ELSE
    \STATE $x^k := x^{k-1}$
    \ENDIF
    \STATE $t_{k+1} := \Big( 1+\sqrt{1 + 4t_k^2} \, \Big) / 2$\vspace{3pt}
    \STATE $y^{k+1} := \displaystyle{x^k + \left(\frac{t_k}{t_{k+1}}\right) (z^k - x^k)
      +  \left(\frac{t_k-1}{t_{k+1}}\right) (x^k - x^{k-1})}$ 
    \STATE $k := k + 1$
    \ENDWHILE
  \end{algorithmic}
\end{algorithm}

Comparing to Algorithm~\ref{alg: acc-pgm}, we observe that a new
variable $z^k$ is used here. From Step~3, if the monotonicity
condition (in this case, from Definition~\ref{def: monotone}(b))
holds, then we proceed as in Algorithm~\ref{alg: acc-pgm}. On the
other hand, if all objective functions values increase, then $y^k$ is
updated as a convex combination with the previous point $x^{k-1} =
x^k$ and the one computed with the subproblem, i.e., $z^k =
d_\ell(x^{k - 1}, y^k)$. As it can be seen in~\cite{BT09b}, when
$m=1$, Algorithm~\ref{alg: mfista} retrieves the single-valued MFISTA.
We end this section with a simple inequality that holds in all
iterations, and that will be used in the convergence analysis.

\begin{lemma}
  \label{lem: F}
  Let $\{x^k\}$ be generated by Algorithm~\ref{alg: mfista}. Then, for all~$k \ge 0$,
  the following inequality holds:
  \begin{equation*}
    \min_{i \in \{1,\ldots,m\}} \Big( F_i(z^k) - F_i(x^k)\Big) \ge 0.
  \end{equation*}
\end{lemma}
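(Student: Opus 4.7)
The plan is to prove this lemma by a direct case analysis based on how $x^k$ is defined in Step~3--7 of Algorithm~\ref{alg: mfista}. There are only two possibilities for $x^k$, and in each case the inequality will follow essentially immediately.

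First, I would handle the case where the monotonicity test in Step~3 succeeds, i.e.,
\[
\max_{i \in \{1,\ldots,m\}} \Big(F_i(x^{k-1}) - F_i(z^k)\Big) \ge 0.
\]
By Step~4, we then have $x^k = z^k$, so $F_i(z^k) - F_i(x^k) = 0$ for every $i$, and therefore $\min_i \big(F_i(z^k) - F_i(x^k)\big) = 0 \ge 0$.

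Next, I would handle the complementary case, where the test in Step~3 fails, so that
\[
\max_{i \in \{1,\ldots,m\}} \Big(F_i(x^{k-1}) - F_i(z^k)\Big) < 0.
\]
This means $F_i(x^{k-1}) < F_i(z^k)$ for every $i$, i.e., every objective strictly increases from $x^{k-1}$ to $z^k$. By Step~6, in this case $x^k = x^{k-1}$, hence $F_i(z^k) - F_i(x^k) = F_i(z^k) - F_i(x^{k-1}) > 0$ for every $i$, and again the minimum over $i$ is nonnegative (in fact strictly positive).

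Combining the two cases gives the desired inequality for all $k$. There is no real obstacle here: the result is essentially immediate from the definition of $x^k$, and the only subtle point is recognizing that the ``else'' branch forces a uniform (in $i$) strict inequality $F_i(x^{k-1}) < F_i(z^k)$, which is exactly the negation of the max condition in Step~3.
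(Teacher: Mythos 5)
Your proof is correct and follows essentially the same two-case argument as the paper: the Step~4 branch gives equality $F_i(z^k)=F_i(x^k)$ for all $i$, and the Step~6 branch uses the negation of the max-test to force $F_i(z^k)>F_i(x^{k-1})=F_i(x^k)$ for every $i$. No issues.
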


\begin{proof}
  In each iteration of the algorithm, either $x^k = z^k$ (Step~4) or
  $x^k = x^{k-1}$ (Step~6) hold.  If the former is satisfied, then
  trivially, $F_i(z^k) - F_i(x^k) = 0$ for all $i=1,\ldots,m$, and so
  the inequality holds. On the other hand, if the latter holds,
  from Step~3 we have
  \[
  \max_{i \in \{1,\ldots,m\}}\Big(F_i(x^{k-1})-F_i(z^k) \Big) \ngeq 0,
  \]
  which means $F_i(z^k) > F_i(x^{k-1})$ for all $i = 1,\ldots,m$.
  Since $x^k = x^{k-1}$, we also have $F_i(x^k) =
  F_i(x^{k-1})$. Therefore, the claim is also true in this case.
\end{proof}


\section{Convergence analysis}
\label{sec:convergence}

In this section, we will prove that the proposed multiobjective MFISTA
(Algorithm~\ref{alg: mfista}) converges globally with rate
$O(1/k^2)$. Before this, let us recall that we assume Lipschitz
continuity of the gradients $\nabla f_i$. Then, from~\eqref{eq:
  lipchitz}, for all $p, q \in \setR^n$ and~$i = 1, \ldots, m$, we
have
\[
f_i(p) - f_i(q) \le \innerp*{\nabla f_i(q)}{p - q} + \frac{L}{2} \norm*{p - q}^2.
\]
This inequality is often called descent lemma~\cite[Proposition
  A.24]{Ber99}. This, together with the definition of $F_i$\red{,} gives
\begin{equation}
  \label{eq: descent mid}
  \begin{split}
    F_i(p) - F_i(r) &= f_i(p) - f_i(q) + g_i(p) + f_i(q) - F_i(r) \\
    &\le \innerp*{\nabla f_i(q)}{p - q} + g_i(p) + f_i(q) - F_i(r)
    + \frac{L}{2} \norm*{p - q}^2
  \end{split}
\end{equation}
for all~$p,\ q,\ r \in \setR^n$ and $i = 1, \ldots, m$. 

Returning to the algorithm itself, from the optimality conditions of
the subproblem~\eqref{prob: placc} (more specifically, that obtains
$d_\ell(x^k,y^{k+1})$), there exist $\mu(x^k,y^{k+1}) \in \partial
g(d_\ell(x^k,y^{k+1}))$ and $\lambda(x^k,y^{k+1}) \in \setR^m$, with
$\lambda_i(x^k,y^{k+1}) \geq 0$ for all~$i=1,\dots,m$ such that
\begin{subequations}
  \label{eq: kkt}
  \begin{gather} 
    \sum_{i = 1}^m \lambda_i(x^k, y^{k+1}) \big( \nabla f_i(y^{k+1}) + \mu_i(x^k, y^{k+1}) \big)
    = - \ell \left( d_\ell(x^k, y^{k+1}) - y^{k+1} \right), \label{eq: optimal} \\
    \sum_{i = 1}^m \lambda_i(x^k, y^{k+1}) = 1, \quad \lambda_j(x^k, y^{k+1})
    = 0 \forallcondition{j \notin \mathcal{A}(x^k, y^{k+1})}, \label{eq: lambda}
    \end{gather}
\end{subequations}
where $\mathcal{A}(x^k, y^{k+1})$ is the following set of active indices:
\begin{align*}
  \mathcal{A}(x^k, y^{k+1}) \coloneqq & \argmax_{i \in \{1, \ldots, m\}}
  \Big( \innerp*{\nabla f_i(y^{k+1})}{d_\ell(x^k, y^{k+1}) - y^{k+1}} \\
  & \hspace{50pt} + g_i(d_\ell(x^k, y^{k+1})) + f_i(y^{k+1}) - F_i(x^k) \Big).
\end{align*}
Furthermore, for all $k \ge 0$, we define $\sigma_k \colon \setR^n \to
[-\infty, \infty)$ as follows:
\begin{equation}
  \label{eq: sigma}
  \sigma_k(z) \coloneqq \min_{i \in \{ 1, \ldots, m \}}\Big( F_i(x^k) - F_i(z) \Big).
\end{equation}
Observe that this function is actually the objective function of the
problem that defines~$u_0(x^k)$, given in~\eqref{def: merit}.
In the subsequent analysis, we will give an upper bound
for~$\sigma_k$, but first we establish the following technical lemma.

\begin{lemma}
  \label{lem: sigma}
  Let $\{x^k\}$, $\{y^k\}$ and $\{z^k\}$ be sequences generated by
  Algorithm~\ref{alg: mfista}. Then, for all $z \in \setR^n$ and~$k
  \ge 0$ we have:
  \begin{itemize}
  \item[(a)] $\displaystyle{-\sigma_{k + 1}(z) \ge \frac{\ell}{2} \left(
    \norm*{z^{k + 1}}^2 - \norm*{y^{k + 1}}^2 -2 \innerp*{z^{k+1} - y^{k + 1}}{z} \right)
    + \frac{\ell - L}{2} \norm*{z^{k + 1} - y^{k + 1}}^2}$,
  \item[(b)] $\sigma_k(z) - \sigma_{k + 1}(z)$\\[3pt]
    $\: \displaystyle{\ge \frac{\ell}{2} \left(
    2 \innerp*{z^{k+1} - y^{k + 1}}{y^{k + 1} - x^k} + \norm*{z^{k + 1} - y^{k + 1}}^2 \right) 
    + \frac{\ell - L}{2} \norm*{z^{k + 1} - y^{k + 1}}^2}$.
  \end{itemize}
\end{lemma}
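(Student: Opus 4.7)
The plan is to derive both parts from Lemma~\ref{lem: F} (which lets us replace $F_i(x^{k+1})$ by the upper bound $F_i(z^{k+1})$) combined with the KKT system~\eqref{eq: kkt} of the subproblem and the descent lemma~\eqref{eq: descent mid}. Throughout I write $\phi_i(w) := \innerp*{\nabla f_i(y^{k+1})}{w-y^{k+1}} + g_i(w) + f_i(y^{k+1}) - F_i(x^k)$ for the $i$-th term inside the subproblem's max, and $h := \max_i \phi_i$, so that $z^{k+1}$ is the minimizer of the $\ell$-strongly convex function $w \mapsto h(w) + \frac{\ell}{2}\norm*{w-y^{k+1}}^2$.

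For part~(a), I would first use Lemma~\ref{lem: F} to lower bound $-\sigma_{k+1}(z) = \max_i(F_i(z)-F_i(x^{k+1}))$ by $\max_i(F_i(z)-F_i(z^{k+1}))$. Combining the descent lemma applied to $F_i(z^{k+1})$ with the convexity of $f_i$ at $z$ and the subgradient inequality $g_i(z) \ge g_i(z^{k+1}) + \innerp*{\mu_i}{z-z^{k+1}}$ (for $\mu_i$ from~\eqref{eq: kkt}), each coordinate satisfies
\[
F_i(z) - F_i(z^{k+1}) \ge \innerp*{\nabla f_i(y^{k+1}) + \mu_i}{z - z^{k+1}} - \tfrac{L}{2}\norm*{z^{k+1} - y^{k+1}}^2.
\]
Averaging against the KKT multipliers $\lambda_i$, substituting~\eqref{eq: optimal}, and using $\max_i \ge \sum_i \lambda_i(\cdot)$ yields $-\sigma_{k+1}(z) \ge \ell\innerp*{z^{k+1}-y^{k+1}}{z^{k+1}-z} - \frac{L}{2}\norm*{z^{k+1}-y^{k+1}}^2$; a short expansion through $2\innerp*{a}{b} = \norm*{a}^2 + \norm*{b}^2 - \norm*{a-b}^2$ with $a = z^{k+1}-y^{k+1}$, $b = z^{k+1}-z$ then produces the form stated in~(a).

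For part~(b), the key observation is the elementary inequality $\min_i a_i - \min_i b_i \ge \min_i(a_i-b_i)$ applied with $a_i = F_i(x^k) - F_i(z)$ and $b_i = F_i(x^{k+1})-F_i(z)$, together with Lemma~\ref{lem: F}, which yields
\[
\sigma_k(z) - \sigma_{k+1}(z) \ge \min_i\bigl(F_i(x^k) - F_i(z^{k+1})\bigr),
\]
a bound now independent of $z$. The descent lemma rewrites each $F_i(x^k)-F_i(z^{k+1})$ as $\ge -\phi_i(z^{k+1}) - \frac{L}{2}\norm*{z^{k+1}-y^{k+1}}^2$, so passing to the min gives $\min_i(F_i(x^k)-F_i(z^{k+1})) \ge -h(z^{k+1}) - \frac{L}{2}\norm*{z^{k+1}-y^{k+1}}^2$. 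To bound $h(z^{k+1})$ from above, I would evaluate the $\ell$-strong-convexity inequality for the subproblem's objective at $w = x^k$, and note that convexity of each $f_i$ yields $h(x^k) \le \max_i(F_i(x^k)-F_i(x^k)) = 0$; this collapses the strong-convexity inequality to
\[
h(z^{k+1}) \le \tfrac{\ell}{2}\bigl(\norm*{x^k-y^{k+1}}^2 - \norm*{z^{k+1}-y^{k+1}}^2 - \norm*{x^k-z^{k+1}}^2\bigr) = \ell\innerp*{x^k-z^{k+1}}{z^{k+1}-y^{k+1}}.
\]
Substituting back and rearranging $\ell\innerp*{z^{k+1}-x^k}{z^{k+1}-y^{k+1}} - \tfrac{L}{2}\norm*{z^{k+1}-y^{k+1}}^2$ along $z^{k+1}-x^k = (z^{k+1}-y^{k+1}) + (y^{k+1}-x^k)$ delivers the announced inequality.

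The main obstacle I anticipate is part~(b). The tempting shortcut of setting $z = x^k$ in~(a) fails because, although the two right-hand sides do coincide at $z = x^k$, the left-hand sides do not: one easily verifies $\sigma_k(z) - \sigma_{k+1}(z) \le \max_i(F_i(x^k)-F_i(x^{k+1})) = -\sigma_{k+1}(x^k)$, so (a) at $z=x^k$ is \emph{too weak} to imply (b). The correct path is the three-step reduction above (elementary min-identity, then Lemma~\ref{lem: F}, then descent lemma), whose non-obvious ingredient is the upper bound $h(x^k) \le 0$ from convexity of $f_i$---this is exactly what makes the strong-convexity inequality evaluated at $w = x^k$ cleanly produce the $\tfrac{\ell-L}{2}$ coefficient appearing in the conclusion.
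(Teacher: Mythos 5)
Your proposal is correct. Part (a) follows essentially the paper's own route: Lemma~\ref{lem: F} to pass from $x^{k+1}$ to $z^{k+1}$, the descent lemma~\eqref{eq: descent mid}, convexity/subgradient inequalities, averaging against the KKT multipliers and substituting~\eqref{eq: optimal}. Part (b), however, takes a genuinely different path for the core estimate. After the same reduction to $\sigma_k(z)-\sigma_{k+1}(z) \ge \min_i\big(F_i(x^k)-F_i(z^{k+1})\big)$ (the paper reaches the equivalent $-\max_i(F_i(z^{k+1})-F_i(x^k))$ via the min-superadditivity~\eqref{eq: min_relations}), the paper re-runs the multiplier machinery: it writes the max as $\sum_i\lambda_i\phi_i(z^{k+1})$ using the active-index property in~\eqref{eq: lambda}, splits $z^{k+1}-y^{k+1}=(x^k-y^{k+1})+(z^{k+1}-x^k)$, discards a nonnegative term by convexity of $f_i$, bounds the $g_i$ part by a subgradient inequality, and substitutes~\eqref{eq: optimal}. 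You instead invoke the $\ell$-strong convexity of the subproblem objective at its minimizer $z^{k+1}$, evaluated at $w=x^k$, together with the observation $h(x^k)\le 0$ (from convexity of $f_i$), which collapses to $h(z^{k+1}) \le \ell\innerp*{x^k-z^{k+1}}{z^{k+1}-y^{k+1}}$. Both arguments land on the identical intermediate inequality $\sigma_k(z)-\sigma_{k+1}(z) \ge \ell\innerp*{z^{k+1}-y^{k+1}}{z^{k+1}-x^k} - \tfrac{L}{2}\norm*{z^{k+1}-y^{k+1}}^2$, after which the rearrangements coincide. The paper's version buys uniformity with part (a) by reusing the already-established KKT system; yours avoids touching the multipliers and subgradients in (b) altogether and is the more conceptual ``three-point''/prox-inequality style standard in FISTA analyses. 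Your closing remark that substituting $z=x^k$ into (a) is insufficient --- because it only bounds $-\sigma_{k+1}(x^k)=\max_i(F_i(x^k)-F_i(x^{k+1}))$, which sits on the wrong side of $\sigma_k(z)-\sigma_{k+1}(z)$ --- is accurate and correctly identifies why (b) needs its own derivation despite the coinciding right-hand sides.
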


\begin{proof}
  (a) Let $z \in \setR^n$ and $k \ge 0$. From the definition of $\sigma_{k+1}$ in~\eqref{eq: sigma} and
  using Lemma~\ref{lem: F}, we have
  \[
  \begin{split}
    \sigma_{k + 1}(z) & = \min_{i \in \{1, \ldots, m\}} \left( F_i(x^{k + 1}) - F_i(z) \right) \\
    &\le \min_{i \in \{ 1, \ldots, m\}} \left( F_i(z^{k + 1}) - F_i(x^{k+1}) \right)
    + \min_{i \in \{ 1, \ldots, m\}} \left( F_i(x^{k + 1}) - F_i(z) \right) \\
    &\le \min_{i \in \{ 1, \ldots, m\}} \left( F_i(z^{k + 1}) - F_i(z) \right),
  \end{split}
  \]
  where the second inequality holds because for all $p, q \in  \setR^m$,
  \begin{equation}
    \label{eq: min_relations}
    \displaystyle\min_{i \in \{ 1, \ldots, m\}} p_i + \min_{i \in \{ 1, \ldots, m\}} q_i
    \le \min_{i \in \{ 1, \ldots, m\}}(p_i + q_i).
  \end{equation}
  From~\eqref{eq: lambda}, since $\lambda_i(x^k, y^{k+1}) \ge 0$ and
  $\sum_{i=1}^m \lambda_i(x^k, y^{k+1}) = 1$, we obtain
  \[  
  \sigma_{k + 1}(z) \le \sum_{i = 1}^m \lambda_i(x^k, y^{k + 1})
  \left( F_i(z^{k + 1}) - F_i(z) \right).
  \]
  Now, using~\eqref{eq: descent mid} with $p = z^{k + 1}$, $q = y^{k + 1}$
  and $r = z$, we get
  \[
  \begin{split}
    \sigma_{k + 1}(z)
    & \le \sum_{i = 1}^m \lambda_i(x^k, y^{k + 1}) \left(
    \innerp*{\nabla f_i(y^{k + 1})}{z^{k + 1} - y^{k + 1}}
    + g_i(z^{k + 1}) + f_i(y^{k + 1}) - F_i(z) \right) \\
    & \:\quad + \frac{L}{2} \norm*{z^{k + 1} - y^{k + 1}}^2.
  \end{split}
  \]
  Also, the convexity of $f_i$ and $g_i$ yield
  \[
  \begin{split} 
    & \sigma_{k + 1}(z)\\
    & \begin{multlined}
        \le \sum_{i = 1}^m \lambda_i(x^k, y^{k + 1}) \left (
        \innerp*{\nabla f_i(y^{k + 1})}{z^{k + 1} - y^{k + 1}}
        + \innerp*{\nabla f_i(y^{k + 1})}{y^{k + 1} - z} \right. \\
        + \left. \innerp*{\mu_i(x^k, y^{k + 1})}{z^{k + 1} - z} \right)
        + \frac{L}{2} \norm*{z^{k + 1} - y^{k + 1}}^2
     \end{multlined} \\
    & = \sum_{i = 1}^m \lambda_i(x^k, y^{k + 1})
    \innerp*{\nabla f_i(y^{k + 1}) + \mu_i(x^k, y^{k + 1})}{z^{k + 1} - z}
    + \frac{L}{2} \norm*{z^{k + 1} - y^{k + 1}}^2.
  \end{split}
  \]
  From Step 2 of Algorithm~\ref{alg: mfista}, we know that
  $d_{\ell}(x^k,y^{k+1}) = z^{k+1}$, and using~\eqref{eq: optimal} we obtain
  \[
  \sigma_{k + 1}(z) \le - \ell \innerp*{z^{k+1} - y^{k + 1}}{z^{k + 1} - z}
  + \frac{L}{2} \norm*{z^{k + 1} - y^{k + 1}}^2.
  \]
  Therefore, we get
  \[
  \begin{split}
    & \sigma_{k + 1}(z) \\
    & \le - \frac{\ell}{2} \left( 2 \innerp*{z^{k+1} - y^{k + 1}}{z^{k + 1} - z}
    - \norm*{z^{k + 1} - y^{k + 1}}^2 \right) - \frac{\ell - L}{2} \norm*{z^{k + 1} - y^{k + 1}}^2 \\
    & = - \frac{\ell}{2} \left( \norm*{z^{k + 1}}^2 - \norm*{y^{k + 1}}^2 -2
    \innerp*{z^{k+1} - y^{k + 1}}{z} \right) - \frac{\ell - L}{2} \norm*{z^{k + 1} - y^{k + 1}}^2,
  \end{split}
  \]
  which shows that (a) holds.\\
  
  \noindent (b) Once again, from the definition of $\sigma_k$
  in~\eqref{eq: sigma} and Lemma~\ref{lem: F}, we have
  \[
  \begin{split}
    & \sigma_k(z) - \sigma_{k + 1}(z) \\
    & \ge \min_{i \in \{ 1, \ldots, m\}}\left( F_i(x^k) - F_i(z) \right)
    - \min_{i \in \{ 1, \ldots, m\}}\left( F_i(x^{k + 1}) - F_i(z) \right)\\
    & \:\quad - \min_{i \in \{ 1, \ldots, m\}}\left( F_i(z^{k+1}) - F_i(x^{k+1}) \right)\\
    & \ge \min_{i \in \{ 1, \ldots, m\}}\left( F_i(x^k) - F_i(z) \right)
    - \min_{i \in \{ 1, \ldots, m\}}\left( F_i(z^{k + 1}) - F_i(z) \right) \\
    &\ge - \max_{i \in \{1, \ldots, m\}}\left( F_i(z^{k + 1}) - F_i(x^k) \right),
  \end{split}
  \]
  where the second and the third inequalities hold using~\eqref{eq: min_relations}.
  From~\eqref{eq: descent mid} with $p = z^{k + 1}$, $q = y^{k+1}$ and $r = x^{k}$,
  we get 
  \[
  \begin{split}
    \MoveEqLeft
    \sigma_k(z) - \sigma_{k + 1}(z) \\
    &\begin{multlined}
       \ge - \max_{i = 1, \ldots, m} \left(
       \innerp*{\nabla f_i(y^{k + 1})}{z^{k + 1} - y^{k + 1}} + g_i(z^{k + 1}) + f_i(y^{k + 1})  \right. \\
       - \left. F_i(x^k) \right) - \frac{L}{2} \norm*{z^{k + 1} - y^{k + 1}}^2
     \end{multlined} \\
    &\begin{multlined}
       = - \sum_{i = 1}^m \lambda_i(x^k, y^{k + 1}) \left(
       \innerp*{\nabla f_i(y^{k + 1})}{z^{k + 1} - y^{k + 1}} + g_i(z^{k + 1}) \right. \\
       + \left. f_i(y^{k + 1}) - F_i(x^k) \right) - \frac{L}{2} \norm*{z^{k + 1} - y^{k + 1}}^2 
     \end{multlined} \\
    &\begin{multlined}
       = - \sum_{i = 1}^m \lambda_i(x^k, y^{k + 1}) \left(
       \innerp*{\nabla f_i(y^{k + 1})}{x^k - y^{k + 1}} + f_i(y^{k + 1}) - f_i(x^k) \right) \\
       - \sum_{i = 1}^m \lambda_i(x^k, y^{k + 1}) \left(
       \innerp*{\nabla f_i(y^{k + 1})}{z^{k + 1} - x^k} + g_i(z^{k + 1}) - g_i(x^k) \right) \\
       - \frac{L}{2} \norm*{z^{k + 1} - y^{k + 1}}^2,
     \end{multlined}
  \end{split}
  \]
  where the first equality holds from~\eqref{eq: lambda} and the second one is true
  by setting $z^{k + 1} - y^{k + 1} = (x^k - y^{k + 1}) + (z^{k + 1} - x^k)$. Since $f_i$ is convex,
  the first term of the above inequality is nonnegative. Moreover, from the convexity
  of $g_i$, we have:
  \begin{align*}
    & \sigma_k(z) - \sigma_{k + 1}(z) \\
    & \ge - \sum_{i = 1}^m {\lambda_i(x^k, y^{k + 1})
      \innerp*{\nabla f_i(y^{k + 1}) + \mu_i(x^k, y^{k + 1})}{z^{k + 1} - x^k}}
    - \frac{L}{2} \norm*{z^{k + 1} - y^{k + 1}}^2,
  \end{align*}
  where $\mu_i(x^k, y^{k + 1}) \in \partial g(d_\ell(x^k,y^{k+1}))$.
  From~\eqref{eq: optimal} the inequality below holds: 
  \[
  \begin{split}
    & \sigma_k(z) - \sigma_{k + 1}(z) \\
    & \ge \ell \innerp*{z^{k+1} - y^{k + 1}}{z^{k + 1} - x^k}
    - \frac{L}{2} \norm*{z^{k + 1} - y^{k + 1}}^2 \\
    & = \frac{\ell}{2} \left( 2 \innerp*{z^{k+1} - y^{k + 1}}{z^{k + 1} - x^k}
    - \norm*{z^{k + 1} - y^{k + 1}}^2 \right) + \frac{\ell - L}{2} \norm*{z^{k + 1} - y^{k + 1}}^2 \\
    & = \frac{\ell}{2} \left( 2 \innerp*{z^{k+1} - y^{k + 1}}{y^{k + 1} - x^k}
    + \norm*{z^{k + 1} - y^{k + 1}}^2 \right) + \frac{\ell - L}{2} \norm*{z^{k + 1} - y^{k + 1}}^2,
  \end{split}
  \]
  which completes the proof. 
\end{proof}

The following result, related to stepsizes $t_k$, is also necessary in
our analysis.
\begin{lemma}
  \label{lem: stepsizes}
  Let $\{t_k\}$ be defined by Algorithm~\ref{alg: mfista}.  Then, for
  all $k \ge 1$, the following assertions hold:
  \[
  (a) \: t_k \ge \frac{k + 1}{2}, \quad
  (b) \: t_k^2 - t_{k+1}^2 + t_{k+1} = 0 \quad \mbox{and} \quad
  (c) \: 1 - \left(\frac{t_{\red{k}} - 1}{t_{k+1}}\right)^2 \ge \frac{1}{t_k} > 0.
  \]
\end{lemma}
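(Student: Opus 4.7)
The three assertions are tightly coupled, and the cleanest route is to prove them out of order: first (b), then (a) using (b), then (c) using (b) and (a).

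For (b), I would start from the definition $t_{k+1} = (1 + \sqrt{1 + 4t_k^2})/2$ in Step~8 of Algorithm~\ref{alg: mfista}, rewrite it as $2t_{k+1} - 1 = \sqrt{1 + 4t_k^2}$ (noting $t_{k+1} \geq 1$ so the left-hand side is nonnegative), square both sides, and simplify $4t_{k+1}^2 - 4t_{k+1} + 1 = 1 + 4t_k^2$ to obtain the claimed identity. This is a one-line calculation.

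For (a), the plan is induction on $k$. The base case $t_1 = 1 \geq (1+1)/2$ is immediate. For the inductive step, I would use (b) to rewrite $t_{k+1}^2 - t_{k+1} = t_k^2$ in the completed-square form $(t_{k+1} - 1/2)^2 = t_k^2 + 1/4$, so that $t_{k+1} = 1/2 + \sqrt{t_k^2 + 1/4} \geq 1/2 + t_k$. Telescoping this one-step gain from $t_1 = 1$ then gives $t_k \geq 1 + (k-1)/2 = (k+1)/2$. This also shows $t_k \geq 1 > 0$ for all $k \geq 1$, which will be needed in (c).

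For (c), I would bring the two terms over a common denominator, writing
\[
1 - \left(\frac{t_k - 1}{t_{k+1}}\right)^2 = \frac{t_{k+1}^2 - (t_k - 1)^2}{t_{k+1}^2},
\]
and then use (b) to replace $t_{k+1}^2$ in the numerator by $t_k^2 + t_{k+1}$, yielding the numerator $t_{k+1} + 2t_k - 1$. The desired inequality $(t_{k+1} + 2t_k - 1)/t_{k+1}^2 \geq 1/t_k$ then reduces, after multiplying through by $t_k t_{k+1}^2 > 0$ and using (b) once more to eliminate $t_{k+1}^2$ on the right, to $(t_k - 1)(t_k + t_{k+1}) \geq 0$. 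This holds because $t_k \geq 1$ by (a), and the strict positivity $1/t_k > 0$ is also given by (a). No step here is really an obstacle; the only subtle point is remembering to invoke (a) to justify $t_k \geq 1$ (rather than just $t_k > 0$) when clearing the $(t_k - 1)$ factor.
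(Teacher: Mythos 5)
Your proof is correct. Note, however, that the paper itself does not prove this lemma at all: it simply cites Lemma~4.2 of the reference on the multiobjective FISTA (\cite{TFY22b}), since the sequence $\set{t_k}$ is generated by exactly the same recursion there. Your argument is therefore a self-contained elementary verification rather than an alternative to anything in the paper. All three steps check out: squaring $2t_{k+1}-1=\sqrt{1+4t_k^2}$ gives (b); the completed square $(t_{k+1}-\tfrac12)^2=t_k^2+\tfrac14$ yields $t_{k+1}\ge t_k+\tfrac12$ and hence (a) by telescoping from $t_1=1$; and for (c), substituting $t_{k+1}^2=t_k^2+t_{k+1}$ into the numerator $t_{k+1}^2-(t_k-1)^2$ and clearing denominators reduces the claim to $(t_k-1)(t_k+t_{k+1})\ge 0$, which follows from $t_k\ge 1$. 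The ordering (b)$\to$(a)$\to$(c) is the natural one, and your remark that (c) needs $t_k\ge 1$ rather than merely $t_k>0$ is exactly the right point of care.
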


\begin{proof}
  See \cite[Lemma 4.2]{TFY22b}.
\end{proof}

Now, we prove that for each $i=1,\dots,m$, the functional values of
any sequence generated by the method do not exceed the value at the
initial point. Note that this result is not trivial, because as
(multiobjective) FISTA, Algorithm~\ref{alg: mfista} also do not
\red{necessarily} guarantee monotonicity for all the objective
functions in all iterations.

\begin{theorem}
  \label{thm: leq ini}
  Let $\set*{x^k}$ be a sequence generated by Algorithm~\ref{alg: mfista}. Then,
  for all $i = 1, \ldots, m$ and $k \ge 0$, we have
  \[
  F_i(x^k) \le F_i(x^0).
  \]
\end{theorem}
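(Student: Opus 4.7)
I plan an induction on $k$. The case $k=0$ is immediate. For the inductive step, Algorithm~\ref{alg: mfista} assigns $x^k = x^{k-1}$ (Step~6) or $x^k = z^k$ (Step~4); the former case follows at once from the hypothesis. In the latter case, the Step~3 condition only guarantees $F_j(z^k) \le F_j(x^{k-1})$ for \emph{some} $j$, not all $j$, so a more refined argument is required---indeed, the other components of $F$ could in principle have increased from $x^{k-1}$ to $z^k$.

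For this Step~4 case, my plan is to exploit the optimality of $z^k$ in the subproblem~\eqref{prob: placc}. Evaluating the subproblem's objective at $z = x^{k-1}$ and using convexity of each $f_j$ (so that $\innerp*{\nabla f_j(y^k)}{x^{k-1}-y^k} + f_j(y^k) \le f_j(x^{k-1})$), each component of the inner max is at most~$0$, giving a subproblem value at $x^{k-1}$ of at most $\tfrac{\ell}{2}\norm*{x^{k-1}-y^k}^2$. Combining with the descent lemma~\eqref{eq: descent mid} applied to each $f_i$ and the fact that $z^k$ attains the subproblem's minimum then yields the key pointwise bound
\[
F_i(z^k) \le F_i(x^{k-1}) + \tfrac{\ell}{2}\norm*{x^{k-1}-y^k}^2 - \tfrac{\ell-L}{2}\norm*{z^k-y^k}^2.
\]

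The hard part will be controlling the error $\tfrac{\ell}{2}\norm*{x^{k-1}-y^k}^2$. At $k=1$ the term vanishes, since $y^1 = x^0 = x^{k-1}$, and Lemma~\ref{lem: F} (which gives $F_i(x^1) \le F_i(z^1)$) closes the $k=1$ case. For $k\ge 2$, however, the term is generally positive and the plain hypothesis $F_i(x^{k-1}) \le F_i(x^0)$ is not strong enough to absorb it. I would close the induction by strengthening it to a potential-function invariant in the spirit of classical MFISTA: define the auxiliary sequence $w^{k+1} := t_k z^k - (t_k-1) x^{k-1}$, which one verifies satisfies the identity $t_{k+1} y^{k+1} - (t_{k+1}-1) x^k = w^{k+1}$ from the $y$-update of Algorithm~\ref{alg: mfista}, and aim to prove an inequality of the form
\[
t_k^2 \max_{i \in \{1,\ldots,m\}}\bigl(F_i(x^k)-F_i(x^0)\bigr) + \tfrac{\ell}{2}\norm*{w^{k+1}-x^0}^2 \le 0
\]
using Lemma~\ref{lem: sigma} together with the identity $t_k^2 = t_{k+1}^2 - t_{k+1}$ from Lemma~\ref{lem: stepsizes}(b) to handle the weighting. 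The desired conclusion then follows by discarding the nonnegative quadratic term. The main technical obstacle I anticipate is verifying the inductive step of this stronger invariant, since the multiobjective setting forces us to work with $\max_i$ rather than a single objective, so Lemma~\ref{lem: sigma} must be applied with a carefully chosen $z$ (likely a convex combination of $x^0$ and $w^{k+1}$ calibrated by $t_k/t_{k+1}$) to make the potential telescope.
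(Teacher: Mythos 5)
Your opening moves are sound and close to the paper's: the componentwise estimate obtained from the optimality of $z^k$ in \eqref{prob: placc} plus the descent lemma is correct, and your observation that the error term vanishes at $k=1$ because $y^1=x^0$ (so the first step is a monotone proximal-gradient step, giving $x^1=z^1$) is exactly how the paper closes its own argument. The genuine gap is the plan for $k\ge 2$. The proposed invariant $t_k^2\max_i\bigl(F_i(x^k)-F_i(x^0)\bigr)+\tfrac{\ell}{2}\norm*{w^{k+1}-x^0}^2\le 0$ is neither established nor obtainable from the available machinery: Lemma~\ref{lem: sigma} and its telescoped form Lemma~\ref{lem: key relation} (which is precisely the potential you describe) control $\sigma_k(z)=\min_i\bigl(F_i(x^k)-F_i(z)\bigr)$, because the KKT multipliers in \eqref{eq: kkt} only yield $\min_i a_i\le\sum_i\lambda_i a_i$; the reverse inequality needed to replace $\min_i$ by $\max_i$ is false. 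Taking $z=x^0$ in Lemma~\ref{lem: key relation} therefore gives only $\min_i\bigl(F_i(x^k)-F_i(x^0)\bigr)\le 0$, i.e.\ that \emph{some} objective lies below its initial value, and no ``carefully chosen $z$'' converts a minimum over components into a maximum. Moreover, your invariant asserts a quantitative simultaneous decrease of \emph{every} objective by $\tfrac{\ell}{2t_k^2}\norm*{w^{k+1}-x^0}^2$; already at $k=1$ with $\ell=L$ this would require $\max_i\bigl(F_i(z^1)-F_i(x^0)\bigr)\le-\tfrac{\ell}{2}\norm*{z^1-x^0}^2$, whereas the derivable bound is only $\le-\tfrac{\ell-L}{2}\norm*{z^1-x^0}^2=0$. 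So the ``main technical obstacle'' you flag is not technical but structural, and this route does not close.

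The paper's proof avoids the weighted potential entirely. It derives the sharper three-point version of your pointwise bound, $F_i(x^k)-F_i(x^{k+1})\ge\tfrac{\ell}{2}\bigl(\norm*{z^{k+1}-x^k}^2-\norm*{y^{k+1}-x^k}^2\bigr)$, via the KKT system \eqref{eq: kkt} (your two-point comparison of subproblem values discards the term $-\tfrac{\ell}{2}\norm*{z^{k+1}-x^k}^2$ unless you also invoke the $\ell$-strong convexity of the subproblem, and that term is essential), then sums \eqref{eq: sub_F} directly from $k=1$ to $\hat k$, splitting the terms according to the index set $K=\set{k: x^k=x^{k-1}}$ of rejected steps and using Lemma~\ref{lem: stepsizes}(b),(c) to show that everything left over is either nonpositive or a multiple of $\norm*{z^1-x^1}^2$, which vanishes since $x^1=z^1$. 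To repair your argument, replace the strengthened induction by this unweighted componentwise telescoping.
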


\begin{proof} 
  Let $i = 1, \ldots, m$ and $k \ge 0$. Note that
  \[
  \begin{split}
    F_i(x^k) - F_i(x^{k+1})
    & \ge - \max_{i \in \{1, \ldots,m\}} \Big(F_i(x^{k+1}) - F_i(x^k)\Big) \\
    & \ge  - \max_{i \in \{1, \ldots,m\}} \Big(F_i(x^{k+1}) - F_i(x^k)\Big)
    + \max_{i \in \{1, \ldots,m\}} \Big(F_i(x^{k+1}) - F_i(z^{k+1})\Big) \\
    & \ge  - \max_{i \in \{1, \ldots,m\}} \Big(F_i(z^{k+1}) - F_i(x^k)\Big),
  \end{split}
  \]
  where the second inequality holds from Lemma~\ref{lem: F}. Also,
  using the same arguments from Lemma~\ref{lem: sigma}'s proof, we get
  \[
  \begin{split}
    F_i(x^k) - F_i(x^{k+1})\ge \frac{\ell}{2}
    \left( 2\innerp*{z^{k+1}-y^{k+1}}{y^{k+1} - x^k} + \norm*{z^{k+1} - y^{k+1}}^2\right) \\
    + \frac{\ell - L}{2} \norm*{z^{k+1} - y^{k+1}}^2.
  \end{split}
  \]
  Since $\ell \ge L$, we can write
  \[
  \begin{split}
    F_i(x^k) - F_i(x^{k+1})
    & \ge \frac{\ell}{2} \left( 2\innerp*{z^{k+1}-y^{k+1}}{y^{k+1} - x^k}
    + \norm*{z^{k+1} - y^{k+1}}^2\right) \\
    & \ge \frac{\ell}{2} \left( \norm*{z^{k+1} - x^k}^2 - \norm*{y^{k+1} - x^k}^2\right).
  \end{split}
  \]
  From the definition of $y^{k+1}$ in Algorithm~\ref{alg: mfista},
  \begin{equation}
    \label{eq: sub_F}
    F_i(x^k) - F_i(x^{k+1}) \ge \frac{\ell}{2} \left( \norm*{z^{k+1} - x^k}^2
    - \norm*{\frac{t_k}{t_{k+1}}(z^k-x^k) + \frac{t_k-1}{t_{k+1}}(x^k - x^{k-1})}^2\right).
  \end{equation}
  The second term of the right-hand side of the above inequality can be written as
  \begin{align*}
    & - \norm*{\frac{t_k}{t_{k+1}}(z^k-x^k) + \frac{t_k-1}{t_{k+1}}(x^k - x^{k-1})}^2 \\
    = & -\frac{1}{t_{k+1}^2} \norm*{(t_k-1)(z^k-x^{k-1}) + z^k - x^k}^2 \\
    = & - \left[ \left(\frac{t_k-1}{t_{k+1}}\right)^2\norm*{z^k-x^{k-1}}^2
      + \frac{1}{t_{k+1}^2}\norm*{z^k - x^k}^2
      + 2 \left(\frac{t_k-1}{t_{k+1}^2} \right) \innerp*{z^k - x^{k-1}}{z^k-x^k}\right].
  \end{align*}
  Now, let $K \coloneqq \{k \colon x^k = x^{k-1}\}$. Summing up $k$ from
  $k=1$ to $k=\hat{k}$ in~\eqref{eq: sub_F}, we obtain
  \begin{equation*} 
    \begin{split} 
      F_i(x^{\hat{k}+1})
      \le & \: F_i(x^1) - \frac{\ell}{2} \norm*{z^{\hat{k}+1} - x^{\hat{k}}}^2
      - \frac{\ell}{2} \sum_{k=2}^{\hat{k}} \left[ \left(1
      - \left(\frac{t_k-1}{t_{k+1}}\right)^2\right) \norm*{z^k - x^{k-1}}^2\right] \\
      & {} +\frac{\ell}{2} \left(\frac{t_1-1}{t_2}\right) \norm*{z^1 - x^0}^2 \\
      & {} + \frac{\ell}{2} \sum_{k=1, k \in K}^{\hat{k}} \left[ \frac{1}{t_{k+1}^2}
      \norm*{z^k-x^k}^2 + 2 \left( \frac{t_k-1}{t_{k+1}^2} \right)
      \innerp*{z^k - x^{k-1}}{z^k-x^k}\right],
    \end{split}
  \end{equation*}
  where the last summation follows because $k \notin K$ means $x^k =
  z^k$ (see Steps 4 and 6 of Algorithm~\ref{alg: mfista}). Moreover,
  the definition of $K$ and the fact that $t_1 = 1$ give
  \begin{equation*} 
    \begin{split}       
      F_i(x^{\hat{k}+1}) \le & \: F_i(x^1) - \frac{\ell}{2}
      \norm*{z^{\hat{k}+1} - x^{\hat{k}}}^2
      - \frac{\ell}{2} \sum_{k=2}^{\hat{k}} \left[ \left(1 -
      \left(\frac{t_k-1}{t_{k+1}}\right)^2\right) \norm*{z^k - x^{k-1}}^2\right] \\
      & {} + \frac{\ell}{2} \sum_{k=1, k \in K}^{\hat{k}}
      \left[\left(\frac{2t_k-1}{t_{k+1}^2}\right) \norm*{z^k-x^k}^2\right] \\
      \le & \: F_i(x^1) - \frac{\ell}{2} \sum_{k=2, k \in K}^{\hat{k}} \left[ \left(1 -
      \left(\frac{t_k-1}{t_{k+1}}\right)^2\right) \norm*{z^k - x^k}^2\right] \\
      & {} + \frac{\ell}{2} \sum_{k=1, k \in K}^{\hat{k}}
      \left[\left(\frac{2t_k-1}{t_{k+1}^2}\right) \norm*{z^k-x^k}^2\right], \\
    \end{split}
  \end{equation*}
  where the second inequality holds from the non-negativity of the
  norm, the fact that $\ell > 0$, and using Lemma~\ref{lem:
    stepsizes}(c) (only for the case $k \notin K$). Simple
  calculations with Lemma~\ref{lem: stepsizes}(b) shows that
  \begin{equation}
    \label{eq: final_iter}
    \begin{split} 
      F_i(x^{\hat{k}+1}) \le & \: F_i(x^1) - \frac{\ell}{2}
      \sum_{k=2, k \in K}^{\hat{k}} \frac{1}{t_{k+1}} \norm*{z^k - x^k}^2
      + \frac{\ell}{2t_2^2} \norm*{z^1-x^1}^2 \\ 
      \le & \: F_i(x^1) + \frac{\ell}{2t_2^2} \norm*{z^1-x^1}^2.
    \end{split}
  \end{equation}
  Furthermore, note that $z^1$ is the solution of the following subproblem: 
  \begin{equation*}
    \min_{z \in \setR^n} \max_{i \in \{1,\ldots,m\}} \{\innerp*{\nabla f_i(x^0)}{z-x^0}
    + g_i(z) - g_i(x^0)\} + \frac{\ell}{2} \norm*{z-x^0}^2.
  \end{equation*}
  This problem is equivalent to the \red{first iteration's subproblem
    of the} multiobjective proximal gradient
  method. From~\cite{TFY19}, the objective functions values decrease
  monotonically in this case. \red{In particular,} for all $i = 1,\ldots,m$,
  we have
  \begin{equation}
    \label{eq: monotone_pgm}
    F_i(z^1) \le F_i(x^0).
  \end{equation}
  Thus, $x^1 = z^1$ and
  \begin{equation} \label{eq: Fxkhat}
    F_i(x^{\hat{k}+1}) \le F_i(x^1) \quad \mbox{for all } i = 1,\ldots,m
  \end{equation}
  \red{holds from~\eqref{eq: final_iter}. Finally, from \eqref{eq:
    monotone_pgm} and the fact that $x^1=z^1$ we have}
  \begin{equation*}
    F_i(x^k) \le F_i(x^0) \quad \mbox{for all } i = 1,\ldots,m, 
  \end{equation*}
  and the proof is complete.
\end{proof}

We finally give an upper bound on $\sigma_k$ in terms of $t_k$ and the
initial point $x^0$. Then, this result will be used in the main
theorem, related to the complexity of Algorithm~\ref{alg: mfista}.

\begin{lemma}
  \label{lem: key relation}
  Let $\set*{x^k}$ be a sequence generated by Algorithm~\ref{alg: mfista}. Then,
  for all $i = 1, \ldots, m$ and $\ k \ge 0$, we have
  \begin{equation}
    t_k^2\sigma_k(z) \le \frac{\ell}{2} \norm*{x^0 - z}^2.
  \end{equation}
\end{lemma}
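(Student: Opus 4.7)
My plan is to mimic the classical FISTA-style Lyapunov analysis by combining the two inequalities from Lemma~\ref{lem: sigma} with carefully chosen multipliers, and then showing that the resulting quantity telescopes when augmented with a suitable squared norm.

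More concretely, I would multiply Lemma~\ref{lem: sigma}(b) by $(t_{k+1}-1)\ge 0$ (which is nonnegative by Lemma~\ref{lem: stepsizes}(a)), add Lemma~\ref{lem: sigma}(a), and then scale the whole thing by $t_{k+1}$. Using the stepsize identity $t_{k+1}(t_{k+1}-1)=t_k^2$ from Lemma~\ref{lem: stepsizes}(b), the left-hand side collapses to $t_k^2\sigma_k(z)-t_{k+1}^2\sigma_{k+1}(z)$. On the right-hand side, the $\frac{\ell-L}{2}$-terms sum to $t_{k+1}^2\cdot\frac{\ell-L}{2}\norm*{z^{k+1}-y^{k+1}}^2\ge 0$ and can be discarded.

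The heart of the proof, and the step I expect to be the most delicate, is then to recognize that the remaining combination of $\frac{\ell}{2}$-terms equals $\frac{\ell}{2}(\norm*{u^{k+1}}^2-\norm*{u^k}^2)$ for the Lyapunov vector
\[
u^k \coloneqq t_k z^k - (t_k-1)x^{k-1} - z.
\]
To see this, I would expand $\|z^{k+1}\|^2-\|y^{k+1}\|^2 = \|z^{k+1}-y^{k+1}\|^2 + 2\inner{z^{k+1}-y^{k+1}}{y^{k+1}}$, so that the bracket reduces to
\[
t_{k+1}\norm*{z^{k+1}-y^{k+1}}^2 + 2\inner{z^{k+1}-y^{k+1}}{(t_{k+1}-1)(y^{k+1}-x^k)+y^{k+1}-z}.
\]
Multiplied by $t_{k+1}$, this has the shape $\norm*{a}^2+2\inner{a}{b} = \norm*{a+b}^2-\norm*{b}^2$ with $a=t_{k+1}(z^{k+1}-y^{k+1})$ and $b=t_{k+1}y^{k+1}-(t_{k+1}-1)x^k-z$. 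Here I would invoke the update rule $y^{k+1} = x^k + \frac{t_k}{t_{k+1}}(z^k-x^k)+\frac{t_k-1}{t_{k+1}}(x^k-x^{k-1})$ to rewrite $t_{k+1}y^{k+1}=(t_{k+1}-1)x^k+t_k z^k-(t_k-1)x^{k-1}$, from which $b=u^k$ and $a+b=u^{k+1}$ fall out by cancellation.

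Combining everything yields the monotonicity inequality
\[
t_{k+1}^2\sigma_{k+1}(z) + \tfrac{\ell}{2}\norm*{u^{k+1}}^2 \;\le\; t_k^2\sigma_k(z)+\tfrac{\ell}{2}\norm*{u^k}^2\quad\text{for all }k\ge 1.
\]
It then remains to pin down the base case $k=1$: since $y^1=x^0$ and $t_1=1$, Lemma~\ref{lem: sigma}(a) at $k=0$ rewrites, via $\|z^1\|^2-\|x^0\|^2-2\inner{z^1-x^0}{z}=\|z^1-z\|^2-\|x^0-z\|^2$, as $t_1^2\sigma_1(z)+\frac{\ell}{2}\|u^1\|^2\le \frac{\ell}{2}\|x^0-z\|^2$ (with $u^1=z^1-z$). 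Telescoping the monotonicity inequality from $1$ to $k$ and discarding the nonnegative term $\frac{\ell}{2}\|u^k\|^2$ gives the claim; the case $k=0$ is immediate since $\sigma_0(z)\le \frac{\ell}{2}\|x^0-z\|^2/t_0^2$ is vacuous/trivial under the natural convention that the inequality at $k=0$ is read as $0\le\frac{\ell}{2}\|x^0-z\|^2$.
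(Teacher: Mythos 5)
Your proposal is correct and follows essentially the same route as the paper's proof: the same weighted combination of Lemma~\ref{lem: sigma}(a) and (b) with multipliers $(t_{k+1}-1)$ and $t_{k+1}$, the same use of Lemma~\ref{lem: stepsizes}(b) and of the Pythagoras identity, and your Lyapunov vector $u^{k+1}$ is exactly the paper's $\rho_k(z)$, with the same base-case reduction via $y^1=x^0$ and $t_1=1$.
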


\begin{proof}
  Using Lemma~\ref{lem: sigma}, we obtain the following inequalities: 
  \begin{gather}
    \begin{multlined}
      -\sigma_{k + 1}(z) \ge \frac{\ell}{2} \left( \norm*{z^{k + 1}}^2
      - \norm*{y^{k + 1}}^2 -2 \innerp*{z^{k+1} - y^{k + 1}}{z} \right)
      + \frac{\ell - L}{2} \norm*{z^{k + 1} - y^{k + 1}}^2,
    \end{multlined}  \\
    \begin{multlined}
      \sigma_k(z) - \sigma_{k + 1}(z) \ge \frac{\ell}{2} \left(
      2 \innerp*{z^{k+1} - y^{k + 1}}{y^{k + 1} - x^k}
      + \norm*{z^{k + 1} - y^{k + 1}}^2 \right) \\
      + \frac{\ell - L}{2} \norm*{z^{k + 1} - y^{k + 1}}^2.
    \end{multlined}
  \end{gather}
  From Lemma~\ref{lem: stepsizes}(a), $t_k \ge 1$ for all $k$. We can then
  multiply the second inequality above by $(t_{k+1}-1)$, and add it
  to the first inequality. Thus, we get
  \[
  \begin{split}
    & (t_{k+1}-1)\sigma_k(z) - t_{k+1}\sigma_{k+1}(z) \\
    & \ge \frac{\ell}{2} \left( t_{k+1}\norm*{z^{k + 1} - y^{k + 1}}^2
    + 2 \innerp*{z^{k+1} - y^{k + 1}}{t_{k+1}y^{k + 1} - (t_{k+1}-1)x^k - z}\right) \\
    & \:\quad + \frac{\ell-L}{2} \left( t_{k+1} \norm*{z^{k + 1} - y^{k + 1}}^2 \right).
  \end{split}
  \]
  Multiplying the above inequality by $t_{k+1}$, we further obtain
  \[
  \begin{split}
    & t_k^2\sigma_k(z) - t_{k+1}^2\sigma_{k+1}(z) \\
    & \ge \frac{\ell}{2} \left( \norm*{t_{k+1}(z^{k + 1} - y^{k + 1})}^2
    + 2t_{k+1} \innerp*{z^{k+1} - y^{k + 1}}{t_{k+1}y^{k + 1} - (t_{k+1}-1)x^k - z} \right) \\
    &  \:\quad + \frac{\ell-L}{2} t_{k+1}^2 \norm*{z^{k + 1} - y^{k + 1}}^2,
    \end{split}
  \]
  where the first inequality holds from Lemma~\ref{lem: stepsizes}(b). The
  Pythagoras relation
  \[
  \norm*{b-a}^2 + 2 \innerp*{b-a}{a-c} = \norm*{b-c}^2 - \norm*{a-c}^2,
  \]
  with $a \coloneqq t_{k+1}y_{k+1}$, $b \coloneqq t_{k+1}z^{k+1}$ and
  $c \coloneqq (t_{k+1} - 1)x^k + z$ also shows that
  \[
    \begin{split}
      & t_k^2\sigma_k(z) - t_{k+1}^2\sigma_{k+1}(z)\\
      & \ge \frac{\ell}{2} \left( \norm*{t_{k+1}z^{k+1} - (t_{k+1} - 1)x^k - z}^2
      - \norm*{t_{k+1}y^{k + 1} - (t_{k+1} - 1)x^k - z}^2\right) \\
      &  \:\quad + \frac{\ell-L}{2} t_{k+1}^2 \norm*{z^{k + 1} - y^{k + 1}}^2 \\
      & \ge \frac{\ell}{2} \left( \norm*{t_{k+1}z^{k+1} - (t_{k+1} - 1)x^k - z}^2
      - \norm*{t_{k+1}y^{k + 1} - (t_{k+1} - 1)x^k - z}^2 \right).
    \end{split}
  \]
  Now, define $\rho_k \colon \setR^n \to \setR$ as 
  \begin{equation}
    \label{eq: rho}
    \rho_k(z) \coloneqq \norm*{t_{k + 1} z^{k+1} - (t_{k + 1} - 1) x^k - z}^2.
  \end{equation}
  From the definition of $y^{k+1}$ given in Step~9 of Algorithm~\ref{alg: mfista},
  we have
  \begin{equation*}
    t_k^2\sigma_k(z) - t_{k+1}^2\sigma_{k+1}(z) \ge \frac{\ell}{2}
    \left\{ \rho_k(z) - \rho_{k-1}(z)\right\}.
  \end{equation*}
  Let $\hat{k} \ge 1$. Summing the above inequality from $k=1$ to $k = \hat{k}$, we get
  \begin{equation*}
    t_1^2\sigma_1(z) - t_{\hat{k}+1}^2\sigma_{\hat{k}+1}(z)
    \ge \frac{\ell}{2} \left\{ \rho_{\hat{k}}(z) - \rho_0(z)\right\}.
  \end{equation*}
  Furthermore, since $\rho_{\hat{k}}(z) \ge 0$, we obtain
  \begin{equation*}
    t_{\hat{k}+1}^2\sigma_{\hat{k}+1}(z) \le \frac{\ell}{2} \rho_0(z) + t_1^2\sigma_1(z).
  \end{equation*}
  From Lemma~\ref{lem: sigma} and the fact that $t_1=1$, we can write
  \[
  \begin{split}
    t_{\hat{k}+1}^2\sigma_{\hat{k}+1}(z)
    & \le \frac{\ell}{2} \norm*{z^1 - z}^2 + \sigma_1(z)  \\
    & \le \frac{\ell}{2} \norm*{z^1 - z}^2 - \frac{\ell}{2} \left( 2\innerp*{z^1-y^1}{z^1-z}
    - \norm*{z^1-y^1}^2\right) - \frac{\ell-L}{2} \norm*{z^1-y^1}^2.
  \end{split}
  \]
  Because $y^1 = x^0$, we have
  \[
  \begin{split}
    t_{\hat{k}+1}^2\sigma_{\hat{k}+1}(z)
    & \le \frac{\ell}{2} \norm*{z^1 - z}^2 - \frac{\ell}{2} \left( 2\innerp*{z^1-x^0}{z^1-z}
    - \norm*{z^1-x^0}^2\right) - \frac{\ell-L}{2} \norm*{z^1-x^0}^2 \\
    & \le \frac{\ell}{2} \norm*{z^1 - z}^2 - \frac{\ell}{2} \left( 2\innerp*{z^1-x^0}{z^1-z}
    - \norm*{z^1-x^0}^2\right) \\
    & = \frac{\ell}{2} \norm*{x^0 - z}^2,
  \end{split}
  \]
  which shows that $t_{k}^2\sigma_k(z) \le \ell/2 \norm*{x^0 -
    z}^2$, as it is claimed.
\end{proof}

\begin{theorem}
  \label{thm: mfista conv rate}
  Let $X^*$ be the set of weakly Pareto optimal points associated to
  problem~\eqref{eq: MOP}, and assume that it is nonempty. For all $x
  \in \Omega_F(F(x^0)) \coloneqq \{x \in \setR^n \mid F(x) \le
  F(x^0)\}$, assume that there exists $x^* \in X^*$ such that $F(x^*)
  \le F(x)$, and define
  \[
  \begin{split}
    R &\coloneqq \sup_{F^\ast \in F(X^\ast \cap \Omega_F(F(x^0)))}
    \min_{x \in F^{-1}(\set*{F^\ast})} \norm*{x-x^0}^2 < \infty.
  \end{split}
  \]
  Let $\set*{x^k}$ be a sequence generated by Algorithm~\ref{alg: mfista}.
  Then, for all $k \ge 0$, we have
  \[
  u_0(x^k) \le \frac{2\ell R}{(k+1)^2}.
  \] 
\end{theorem}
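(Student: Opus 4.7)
The plan is to convert the pointwise bound $t_k^2\sigma_k(z) \le (\ell/2)\norm{x^0 - z}^2$ from Lemma~\ref{lem: key relation} into a bound on $u_0(x^k) = \sup_z \sigma_k(z)$, by using the hypothesis on $X^*$ to restrict the supremum and the structure of $R$ to absorb the resulting norm term.

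The first step is to reduce the range of the supremum defining $u_0(x^k)$ from $\setR^n$ to $X^* \cap \Omega_F(F(x^0))$. Since $\sigma_k(x^k) = 0$, the quantity $u_0(x^k)$ is nonnegative, so only points $z$ with $\sigma_k(z) \ge 0$, equivalently $F(z) \le F(x^k)$, can be relevant. For any such $z$, Theorem~\ref{thm: leq ini} gives $F(x^k) \le F(x^0)$, so $z \in \Omega_F(F(x^0))$; the standing hypothesis then produces $z^* \in X^*$ with $F(z^*) \le F(z)$. Since $F_i(x^k) - F_i(z^*) \ge F_i(x^k) - F_i(z)$ for every~$i$, taking the minimum preserves the inequality and yields $\sigma_k(z^*) \ge \sigma_k(z)$. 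Moreover $F(z^*) \le F(x^0)$, so $z^* \in X^* \cap \Omega_F(F(x^0))$. Hence
\begin{equation*}
u_0(x^k) = \sup_{z^* \in X^* \cap \Omega_F(F(x^0))} \sigma_k(z^*).
\end{equation*}

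The second step applies Lemma~\ref{lem: key relation}, giving $\sigma_k(z^*) \le \frac{\ell}{2 t_k^2}\norm{x^0 - z^*}^2$ for every such $z^*$. The crucial observation is that $\sigma_k(z^*)$ depends on $z^*$ only through $F^* := F(z^*)$, and weak Pareto optimality is a property of $F$-values (so every point in $F^{-1}(\{F^*\})$ lies in $X^* \cap \Omega_F(F(x^0))$). Therefore, for each fixed $F^*$ I may choose the preimage minimizing $\norm{x - x^0}^2$, obtaining
\begin{equation*}
\sigma_k(z^*) \le \frac{\ell}{2 t_k^2}\, \min_{x \in F^{-1}(\{F^*\})}\norm{x - x^0}^2.
\end{equation*}
Taking the supremum over $F^* \in F(X^* \cap \Omega_F(F(x^0)))$ and recognizing the definition of $R$ then gives $u_0(x^k) \le \ell R/(2 t_k^2)$.

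The final step is Lemma~\ref{lem: stepsizes}(a), which provides $t_k \ge (k+1)/2$, so $1/t_k^2 \le 4/(k+1)^2$ and the claimed estimate $u_0(x^k) \le 2\ell R/(k+1)^2$ follows. The main obstacle is the first step: one has to justify both that $\sigma_k$ never decreases when an arbitrary dominator $z$ is replaced by a weakly Pareto optimal dominator $z^*$, and that the seemingly unusual inner minimization in the definition of $R$ is exactly what the level-set argument in the second step produces.
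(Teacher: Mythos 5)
Your proof is correct and follows essentially the same route as the paper: Lemma~\ref{lem: key relation} together with $t_k \ge (k+1)/2$ from Lemma~\ref{lem: stepsizes}(a), followed by restricting the supremum defining $u_0(x^k)$ to $X^* \cap \Omega_F(F(x^0))$ via Theorem~\ref{thm: leq ini} and the domination hypothesis, and then absorbing the norm through the inner minimization in the definition of $R$. The only difference is that the paper outsources your first step (the level-set reduction of the supremum) to the proof of Theorem~5.2 of the cited reference on the multiobjective proximal gradient method, whereas you spell that argument out explicitly and correctly.
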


\begin{proof}
  Let $k \ge 0$ and take $z \in \setR^n$ arbitrarily. From the
  Lemma~\ref{lem: stepsizes}(a), we know that $t_k \ge
  (k+1)/2$. Therefore, from Lemma~\ref{lem: key relation}, we have
  \[
  \begin{split}
    \sigma_k(z)
    = \min_{i = 1, \ldots, m}\left( F_i(x^k) - F_i(z) \right)
    \le \frac{\ell}{2t_k^2} \norm*{x^0 - z}^2  
    \le \frac{2\ell\norm*{x^0-z}^2}{(k+1)^2},
  \end{split}
  \]
  and thus,
  \[
  \sup_{F^\ast \in F(X^\ast \cap \Omega_F(F(x^0)))} \min_{z \in F^{-1}(\set*{F^\ast})}
  \left\{F_i(x^k) - F_i(z)\right\} \le \frac{2\ell R}{(k+1)^2}.
  \]
  Using Theorem~\ref{thm: leq ini} and with similar arguments used in
  the proof of~\cite[Theorem~5.2]{TFY22a}, we obtain
  \[
  u_0(x^k) \le \frac{2\ell R}{(k + 1)^2},
  \]
  and the proof is complete.
\end{proof}

Recalling Proposition~\ref{prop: merit}, the above result states that
the Algorithm~\ref{alg: mfista} converges globally (in the weakly
Pareto sense) with convergence rate $O(1/k^2)$. Here, we use the same
scalar~$R$ used in the convergence analysis of multiobjective
FISTA~\cite[Assumption~3.1]{TFY22b}, which was in turn used in the
analysis of the multiobjective proximal gradient
method~\cite[Assumption~5.1]{TFY22a}. As stated in
\cite[Remark~5.3]{TFY22a}, the assumption holds trivially when $m=1$
and at least one optimal point exists, or when the level set
$\Omega_F(F(x^0))$ is bounded. 

\begin{corollary}
  Suppose that the same assumptions of Theorem~\ref{thm: mfista conv rate}
  hold, and let $\set*{x^k}$ be a sequence generated by
  Algorithm~\ref{alg: mfista}. Then, every accumulation point of $\set*{x^k}$
  is weakly Pareto optimal for~\eqref{eq: MOP}.
\end{corollary}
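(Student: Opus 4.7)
The plan is to combine Theorem~\ref{thm: mfista conv rate} with a lower semicontinuity property of the merit function $u_0$, and then invoke Proposition~\ref{prop: merit}. From Theorem~\ref{thm: mfista conv rate} we have $u_0(x^k) \to 0$, while Proposition~\ref{prop: merit} tells us that weak Pareto optimality is characterized by $u_0 = 0$. So, for any accumulation point $\bar x$, it suffices to pass the limit $u_0(x^{k_j}) \to 0$ through to $u_0(\bar x) = 0$ along a convergent subsequence $x^{k_j} \to \bar x$, using nonnegativity of $u_0$ to pin the value from below.

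The technical step is to establish that $u_0$ is lower semicontinuous on $\setR^n$. For each fixed $z$, the function $x \mapsto F_i(x) - F_i(z)$ is lsc in $x$, since $f_i$ is continuous and $g_i$ is closed, proper and convex (hence lsc). The finite minimum $x \mapsto \min_{i} \bigl(F_i(x) - F_i(z)\bigr)$ is therefore also lsc, because its sublevel set $\{x : \min_i (F_i(x)-F_i(z)) \le \alpha\}$ equals the finite union $\bigcup_{i=1}^m \{x : F_i(x) - F_i(z) \le \alpha\}$, which is closed. Since $u_0$ is the pointwise supremum of this family (indexed by $z \in \setR^n$) of lsc functions, $u_0$ itself is lsc.

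With lsc in hand, fix any accumulation point $\bar x$ of $\{x^k\}$ and a subsequence $x^{k_j} \to \bar x$. By Theorem~\ref{thm: mfista conv rate}, $u_0(x^{k_j}) \to 0$, so lower semicontinuity gives
\[
u_0(\bar x) \;\le\; \liminf_{j \to \infty} u_0(x^{k_j}) \;=\; 0.
\]
Since Proposition~\ref{prop: merit} also guarantees $u_0(\bar x) \ge 0$, we conclude $u_0(\bar x) = 0$, and another application of Proposition~\ref{prop: merit} yields that $\bar x$ is weakly Pareto optimal for~\eqref{eq: MOP}.

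The main obstacle is verifying lower semicontinuity of $u_0$: because $g_i$ is only lsc rather than continuous, each $F_i$ is extended-real-valued and the argument must rely on closed sublevel sets rather than pointwise continuity. However, since lsc is preserved under finite minima and arbitrary suprema, the property transfers cleanly to $u_0$, and the remainder of the proof is a short limit argument.
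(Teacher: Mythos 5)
Your proposal is correct and follows essentially the same route as the paper, whose one-line proof cites exactly the same three ingredients: Theorem~\ref{thm: mfista conv rate} (giving $u_0(x^k)\to 0$), the lower semicontinuity of the objectives, and Proposition~\ref{prop: merit}. You have simply made explicit the step the paper leaves implicit, namely that lower semicontinuity passes from the $F_i$ through the finite minimum and the supremum over $z$ to $u_0$, which together with $u_0 \ge 0$ pins down $u_0(\bar x)=0$ at any accumulation point.
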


\begin{proof}
  It is clear from Theorem~\ref{thm: mfista conv rate},
  Proposition~\ref{prop: merit} and the lower-semicontinuity of the
  objectives.
\end{proof}

\noindent For completeness, we state above the immediate global
convergence result. Naturally, if each objective function is also
strictly convex, we also end up in (strongly) Pareto optimal
points~\cite[Theorem~3.1]{FGDS09}.


\section{Numerical experiments}
\label{sec:experiments}

In this section, we present some simple numerical experiments to
validate the described results. We modified the code used
in~\cite{TFY22b}\footnote{The source code is available in
  \url{https://github.com/zalgo3/zfista}}, implemented the proposed
method in Python~3.7.4 and ran all the experiments on a 1.1~GHz Intel
Core i5 machine with 4 cores and 8GB of memory.  Besides the results
of Algorithm~\ref{alg: mfista}, which we simply state as Weak-MFISTA
here, we also show the results of the multiobjective proximal gradient
method (PGM)~\cite{TFY19}, multiobjective FISTA~\cite{TFY22b}, and the
version of MFISTA replacing the weakly decrasing condition
(Definition~\ref{def: monotone}(b)) with the strongly decreasing one
(Definition~\ref{def: monotone}(a)), which we call Strong-MFISTA.  We
consider the following test problems~\cite{FGDS09}, with and without
objective~$g$.\\

\noindent \textbf{Problem 1.} $m=3$, and
\begin{align*}
  & f_1(x) = \frac{1}{n^2} \sum_{i = 1}^n i (x_i - i)^4,
  \quad f_2(x) = \exp \left( \sum_{i = 1}^n \frac{x_i}{n} \right) + \|x\|_2^2, \\
  & f_3(x) = \frac{1}{n(n + 1)} \sum_{i = 1}^n i (n - i + 1) \exp (- x_i), \\
  & g_1(x) = g_2(x) =  g_3(x) = 0. 
\end{align*}

\noindent \textbf{Problem 2.} $m=3$, and
\begin{align*}
  & f_1(x), f_2(x), \mbox{and } f_3(x) \mbox{ defined as in Problem 1}, \\
  & g_1(x) = g_2(x) =  g_3(x) = \chi_{\setR_+^n}(x),
\end{align*}
where $\chi_{\setR_+^n}$ denotes the indicator function of the set
$\setR_+^n$.\\

For \red{the above problems}, we run all the algorithms $100$ times, with
different initial points, which are taken in the intervals $[-2,2]^n$
and $[0,2]^n$ for Problem~1 and Problem~2, respectively. The stopping
parameter $\varepsilon$ is set as $10^{-5}$, and the dimension $n$ is
equal to $10$. Also, the subproblems are converted to their dual and
solved with a trust-region interior point method using Scipy
library. The final solutions are shown in Figure~\ref{fig: answer}. In
each problem, we obtain similar \red{sets of weakly Pareto optimal
points} for all versions of FISTA methods.

\begin{figure}[tbp]
  \begin{tabular}{@{\hspace{-3pt}}c} 
    \begin{minipage}[b]{0.5\linewidth}
      \centering
      \includegraphics[keepaspectratio, scale=0.38]{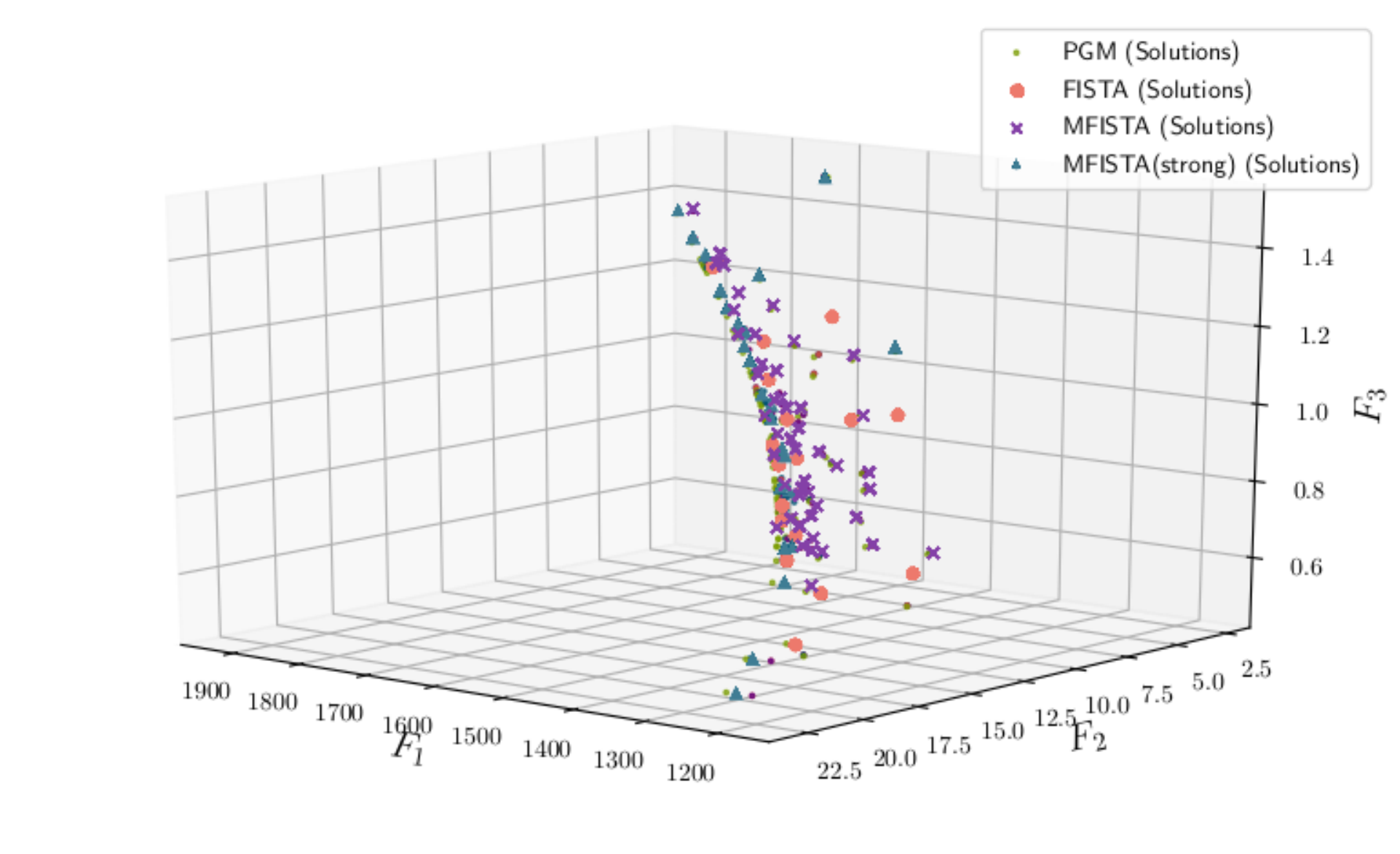}
    \end{minipage}
    \begin{minipage}[b]{0.5\linewidth}
      \centering
      \includegraphics[keepaspectratio, scale=0.38]{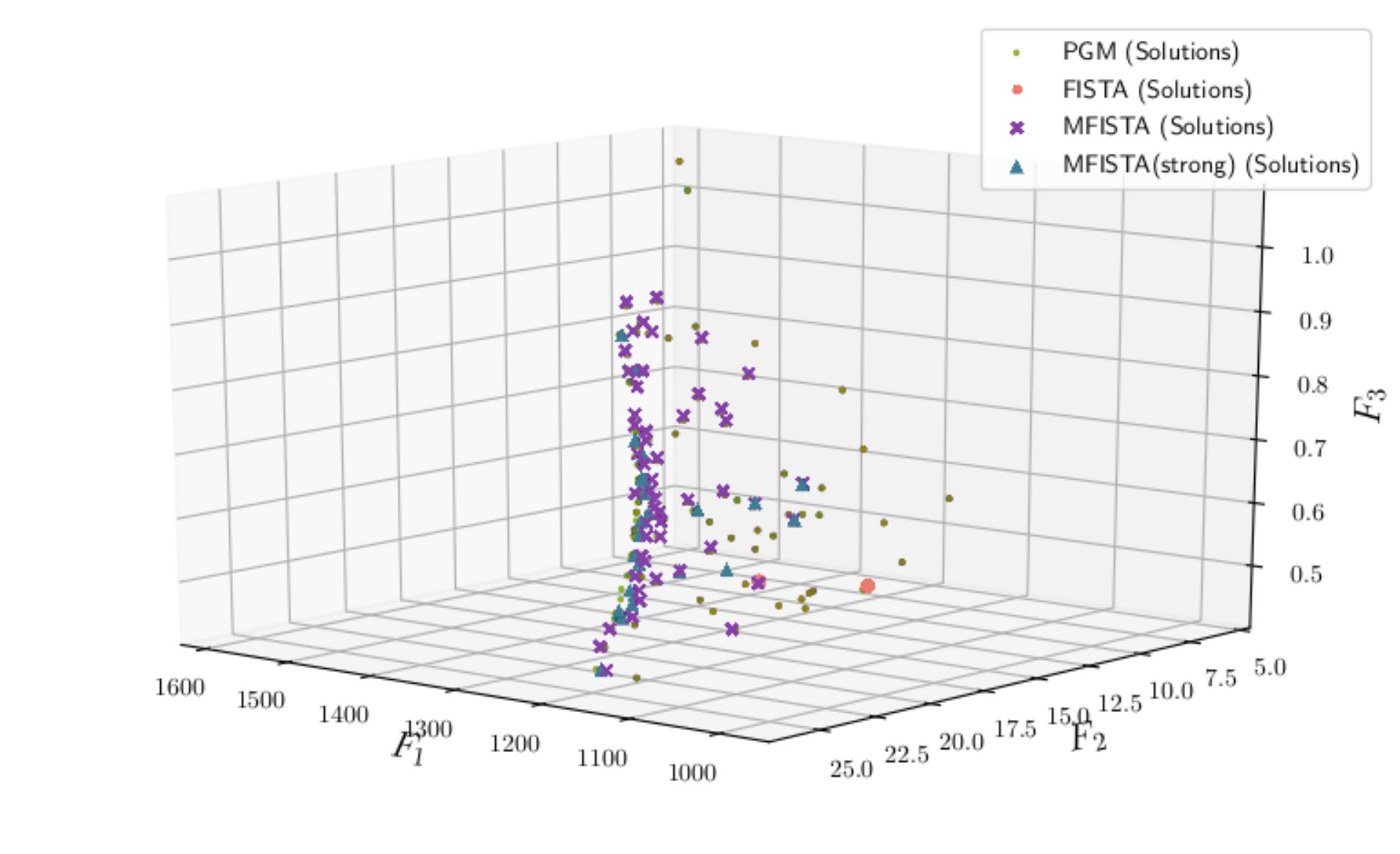}
    \end{minipage}
  \end{tabular}
  \caption{Pareto solutions obtained for Problem 1 (left) and Problem 2 (right)}
  \label{fig: answer}
\end{figure}

Moreover, in Figures~\ref{fig: FGDS09} and~\ref{fig: FGDS09_cons}, for
a given initial point, we plot $|F_i(x^k) - F_i(x^*)|$, where $x^*$ is
the final (possible solution) point, at each iteration~$k$. The
logarithm is taken just for better visibility. As it is possible to
see, for Strong-MFISTA, the objective functions are nonincreasing for
all iterations. On the other hand, for Problem~1 and differently from
FISTA, the Weak-MFISTA shows increase for $F_2$ and $F_3$, but
decreases for at least the objective function~$F_1$. A similar
situation happens for Problem~2, but the difference between FISTA and
Weak-MFISTA is small in this case.

\begin{figure}[htbp]
  \begin{center}
    \begin{tabular}{@{\hspace{-3pt}}c} 
      \begin{minipage}[b]{0.33\linewidth}
        \centering
        \includegraphics[keepaspectratio, width=50mm]{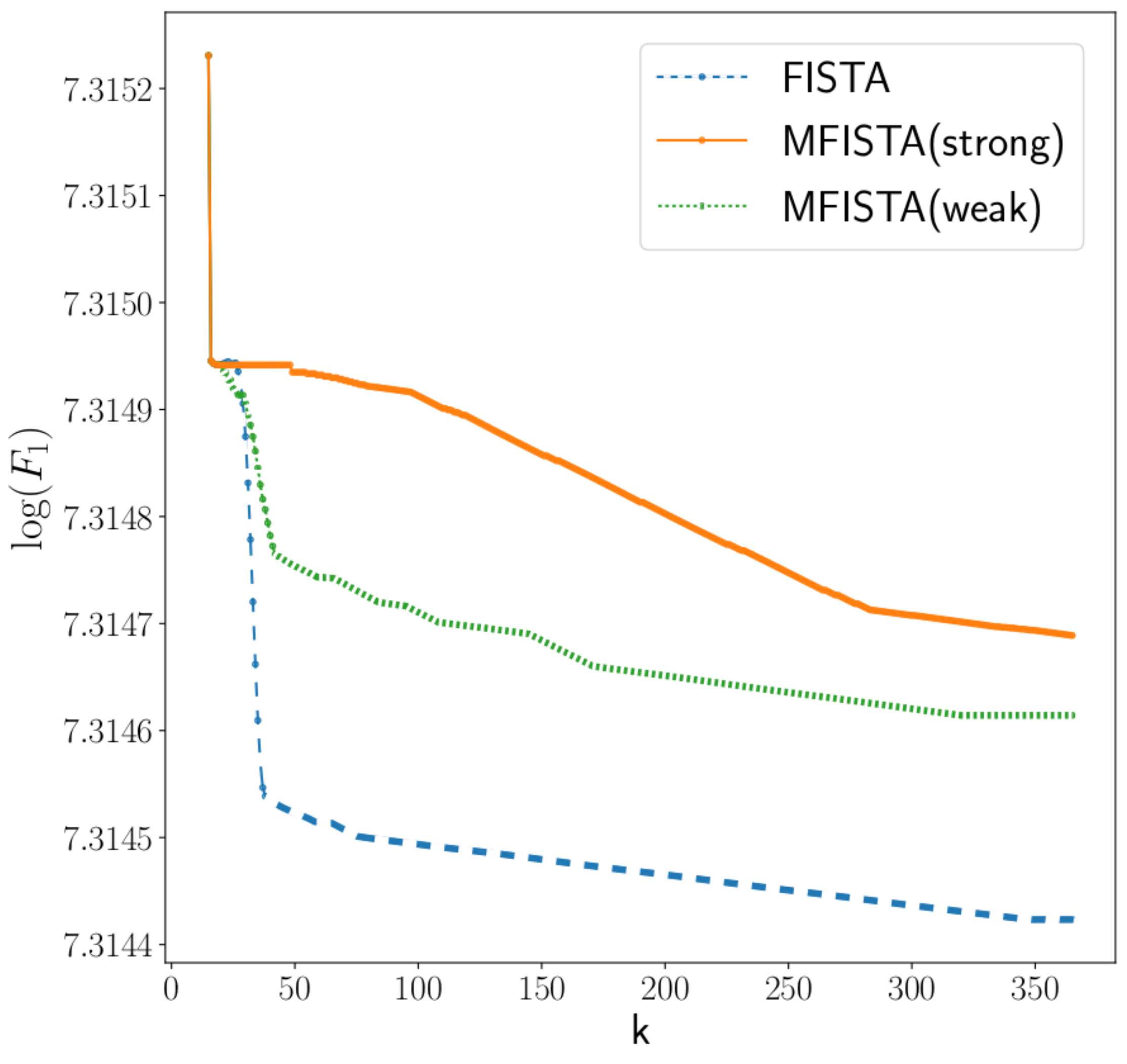}
        $F_1$
      \end{minipage}
      \begin{minipage}[b]{0.33\linewidth}
        \centering
        \includegraphics[keepaspectratio, width=50mm]{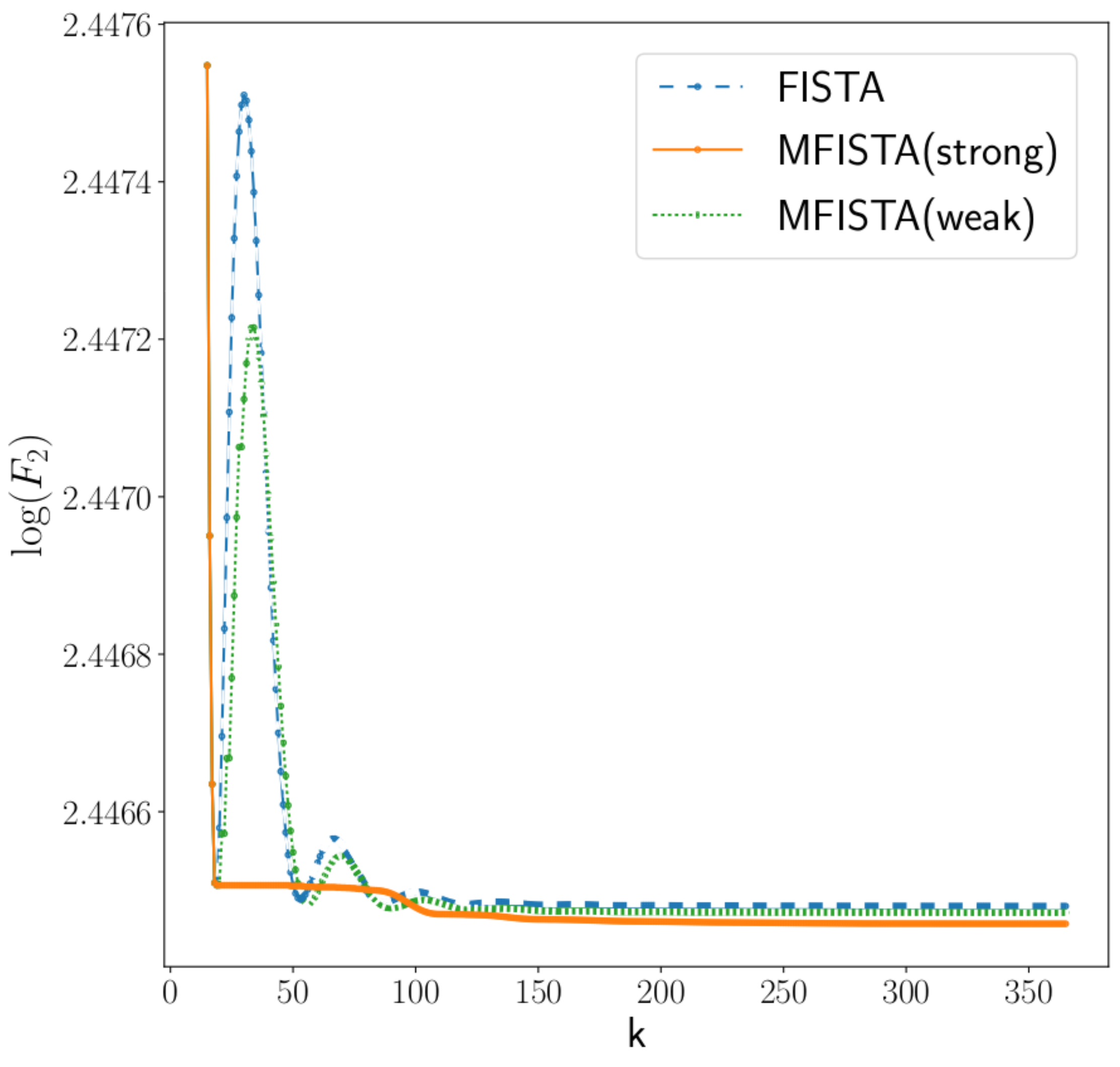}
        $F_2$
      \end{minipage}
      \begin{minipage}[b]{0.33\linewidth}
        \centering
        \includegraphics[keepaspectratio, width=50mm]{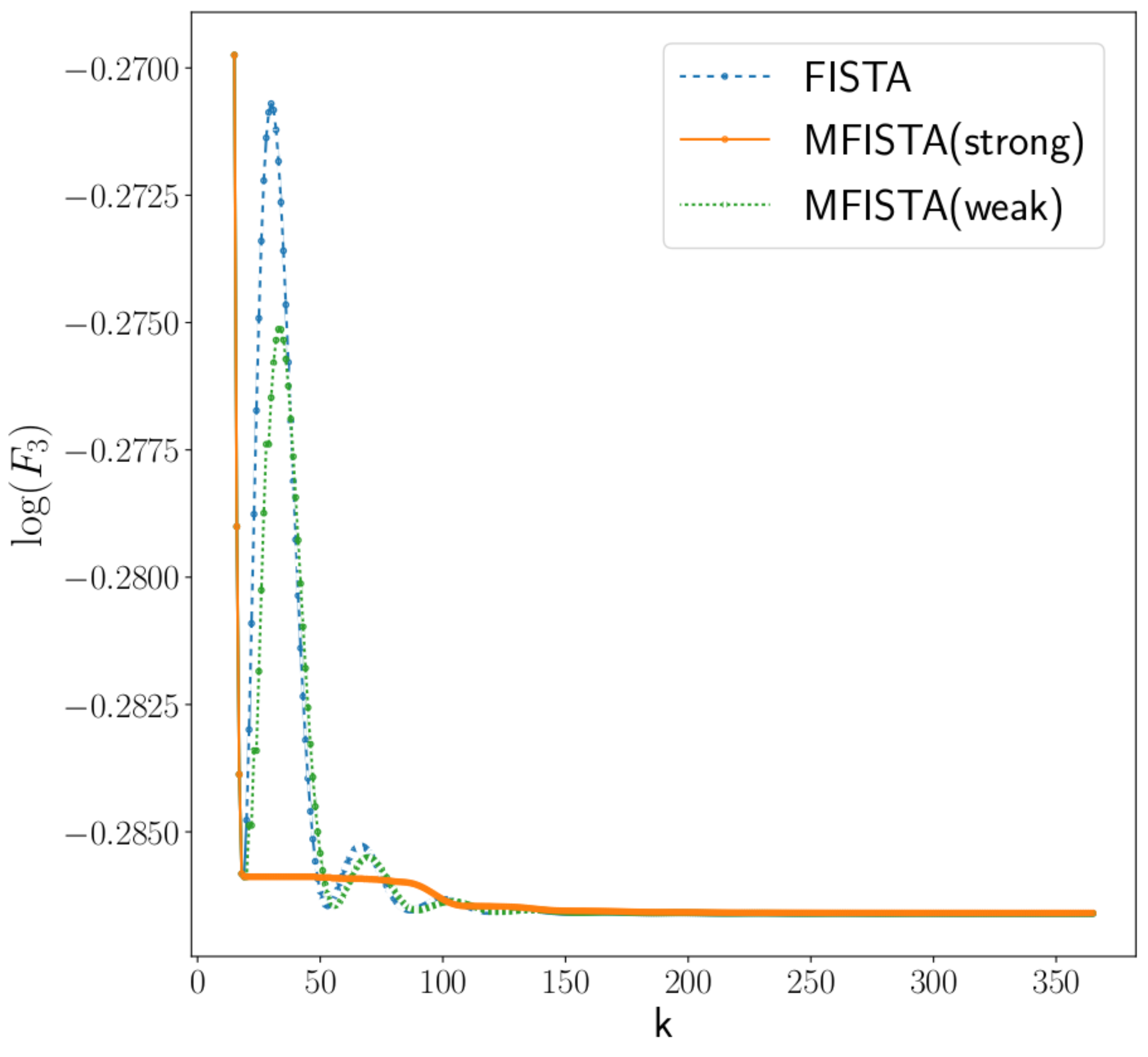}
        $F_3$
      \end{minipage}
    \end{tabular}
    \caption{Functional values for Problem~1}
    \label{fig: FGDS09}
  \end{center}
\end{figure}

\begin{figure}[htbp]
  \begin{center}
    \begin{tabular}{@{\hspace{-3pt}}c} 
      \begin{minipage}[b]{0.33\linewidth}
        \centering
        \includegraphics[keepaspectratio, width=50mm]{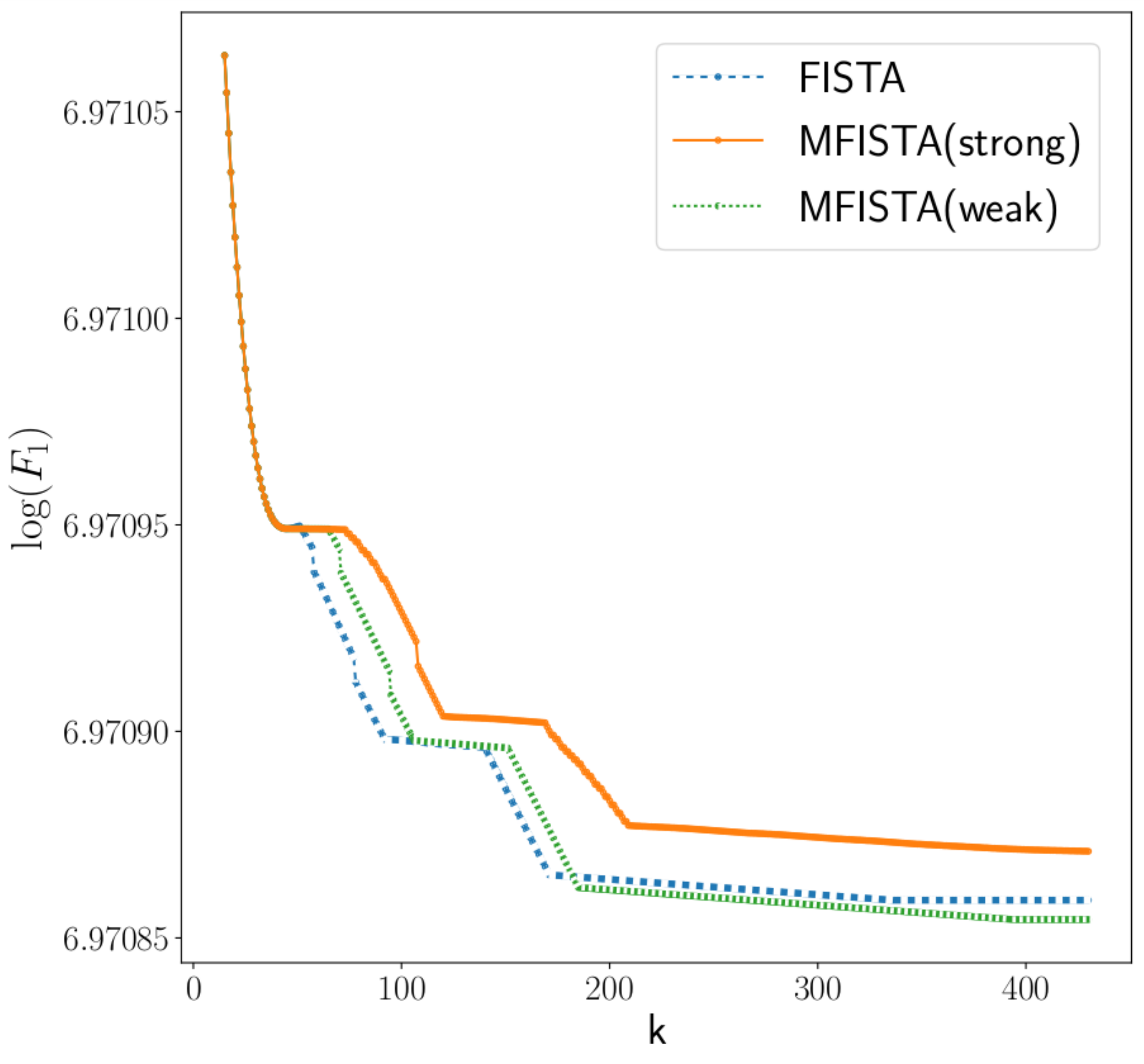}
        $F_1$
      \end{minipage}
      \begin{minipage}[b]{0.33\linewidth}
        \centering
        \includegraphics[keepaspectratio, width=50mm]{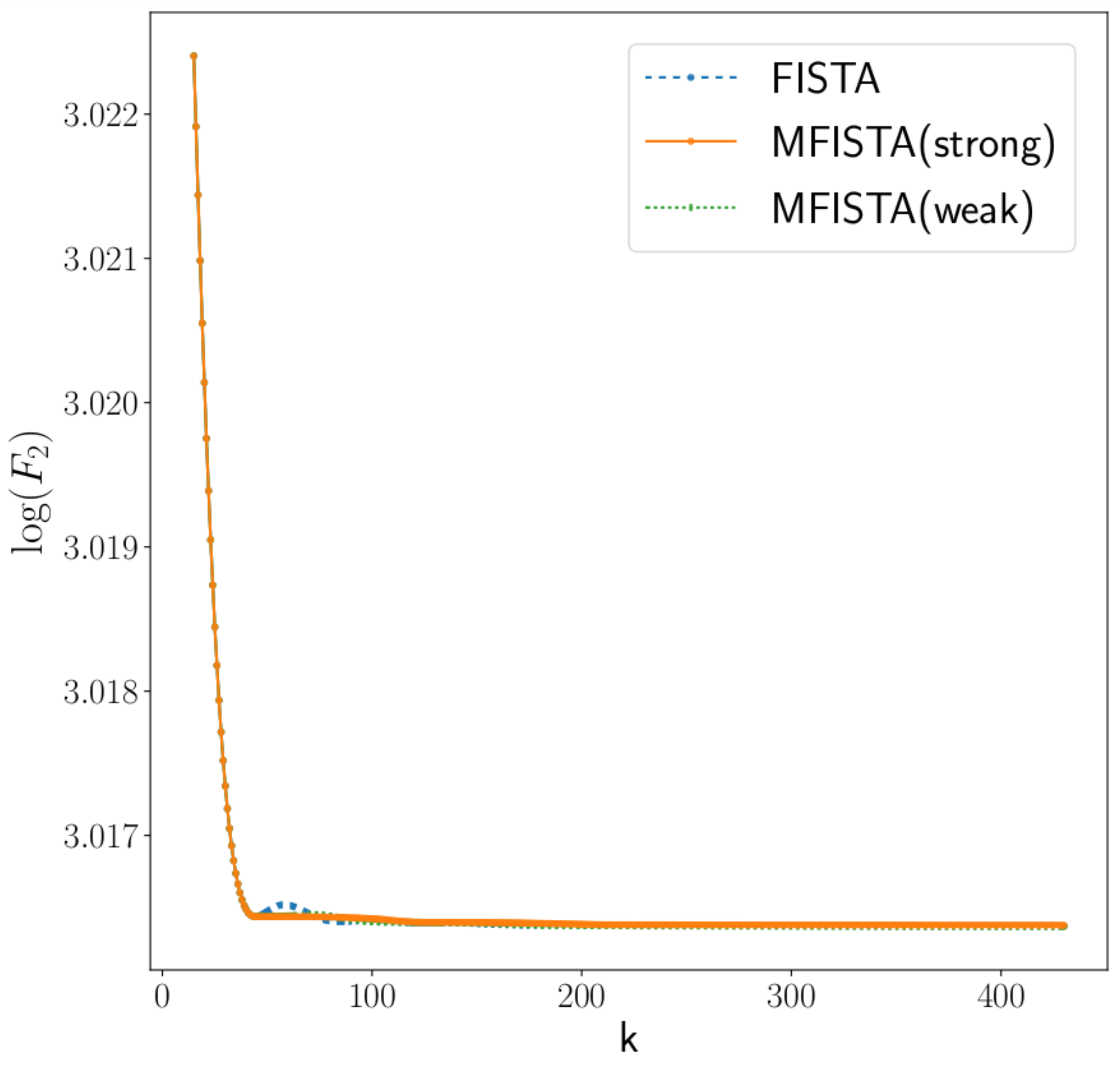}
        $F_2$
      \end{minipage}
      \begin{minipage}[b]{0.33\linewidth}
        \centering
        \includegraphics[keepaspectratio, width=50mm]{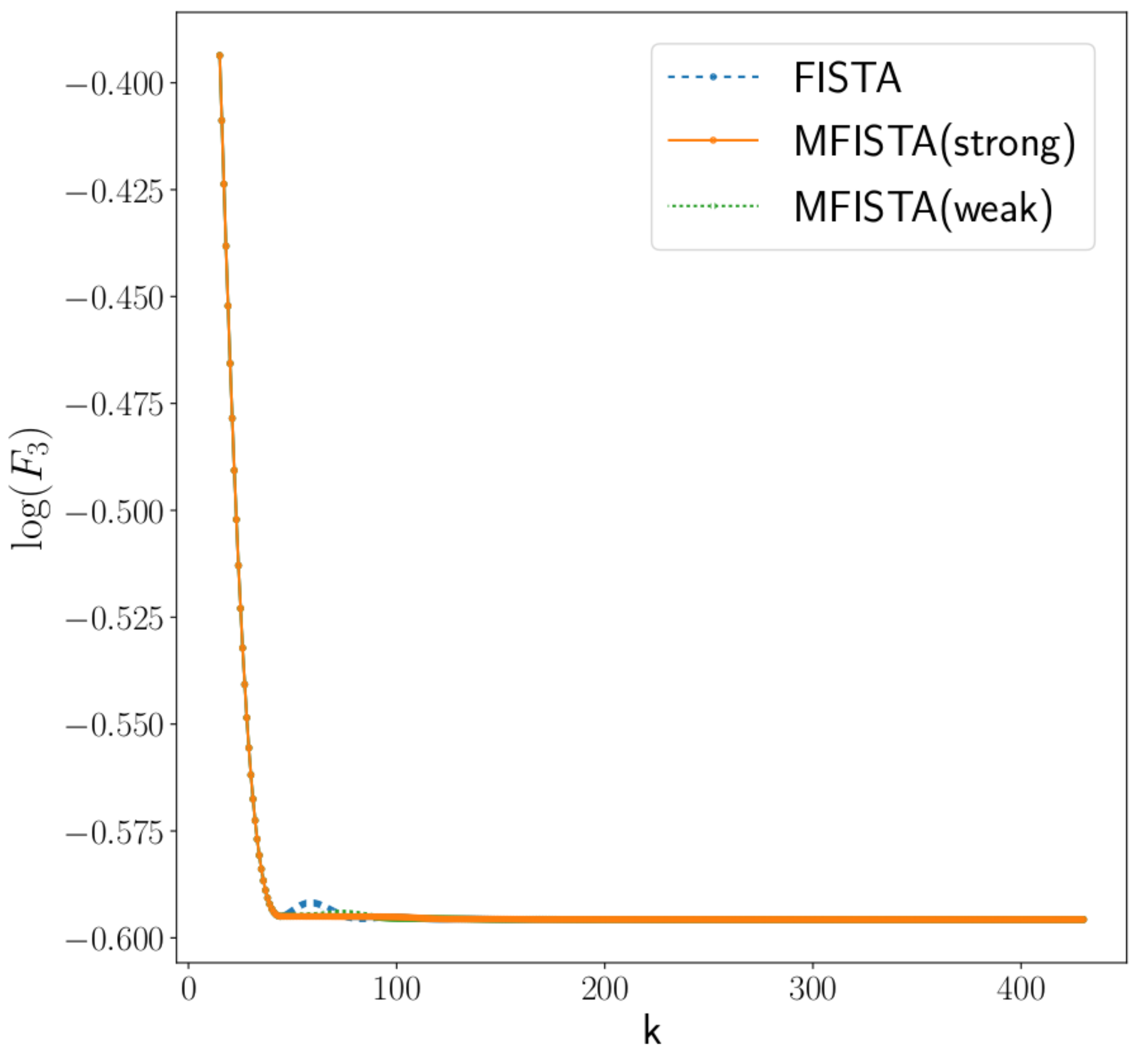}
        $F_3$
      \end{minipage}
    \end{tabular}
    \caption{Functional values for Problem~2}
    \label{fig: FGDS09_cons}
  \end{center}
\end{figure}

\begin{table}
  \caption{Average iterations and time for Problem~1}
  \label{table: iteration}
  \centering
  \begin{tabular}{ccccc}
  \hline
  & \red{PGM} &  FISTA & Weak-MFISTA & Strong-MFISTA \\
  \hline
  Iterations & 606.24 & 206.42 & 203.88 & 202.37 \\
  Time (s)   & 150.85 & 50.41 & 49.78 & 49.52 \\
  \hline
  \end{tabular}
\end{table}

\begin{table}
  \caption{Average iterations and time for Problem~2}
  \label{table: iteration_cons}
  \centering
  \begin{tabular}{ccccc}
  \hline
  & \red{PGM} &  FISTA & Weak-MFISTA & Strong-MFISTA \\
  \hline
  Iterations & 981.31 & 276.91 & 277.42 & 303.46 \\
  Time (s)   & 450.40 & 131.39 & 131.99 & 144.09 \\
  \hline
  \end{tabular}
\end{table}

We also check the average number of iterations and time taken for each
algorithm. As it can be seem in Tables~\ref{table: iteration}
and~\ref{table: iteration_cons}, all accelerated methods are better
than the proximal gradient method. \red{However, by checking also the
  standard deviations, we conclude that FISTA, Weak-MFISTA and
  Strong-MFISTA are similar in terms of iterations and time.} This
result is interesting, since in the single-objective case, we usually
expect MFISTA to spend more time than FISTA, because of the extra
computations.

\red{Now, to observe the difference among the accelerated methods, we 
consider the following problem, which is the image deblurring
problem~\cite{BT09a}, together with an arbitrary objective
function that was only added to make the problem multiobjective.} \\

\noindent \red{\textbf{Problem 3.} $m=2$, and
\begin{align*}
  & f_1(x) = \| BWx - b \|^2, \quad f_2(x) = 0 \\
  & g_1(x) = \lambda \| x\|_1, \quad g_2(x) = \lambda \| x - 1 \|_1
\end{align*}
where $\|\cdot\|_1$ denotes the $\ell_1$-norm, $B$ is the matrix
representing the blur operator, $W$ is the inverse of the Haar wavelet
transform, $b$ is the vectorized observed image, and $\lambda$ is the
regularization parameter.} \\

\red{For Problem~3, we consider the $256 \times 256$ cameraman test image
(see~\cite[Section~5]{BT09a}) that goes through a $9 \times 9$
Gaussian blur with standard deviation~$4$, followed by an additional
white Gaussian noise with zero-mean and standard deviation
$10^{-3}$. The observed image's wavelet transform is used as the
initial point, and we set up $\lambda = 2 \times
10^{-5}$. Figure~\ref{fig: cameraman} shows the objective functions
values along the iterations. For the meaningless $F_2$, the objective
function increases with a small order in both FISTA and Weak-MFISTA.
For the image deblurring related function $F_1$, FISTA diverges
considerably during the process, Strong-MFISTA fails to converge, and
Weak-MFISTA converges in few iterations.}

\begin{figure}[htbp]
  \begin{center}
    \begin{tabular}{@{\hspace{-3pt}}c} 
      \begin{minipage}[b]{0.33\linewidth}
        \centering
        \includegraphics[keepaspectratio, width=50mm]{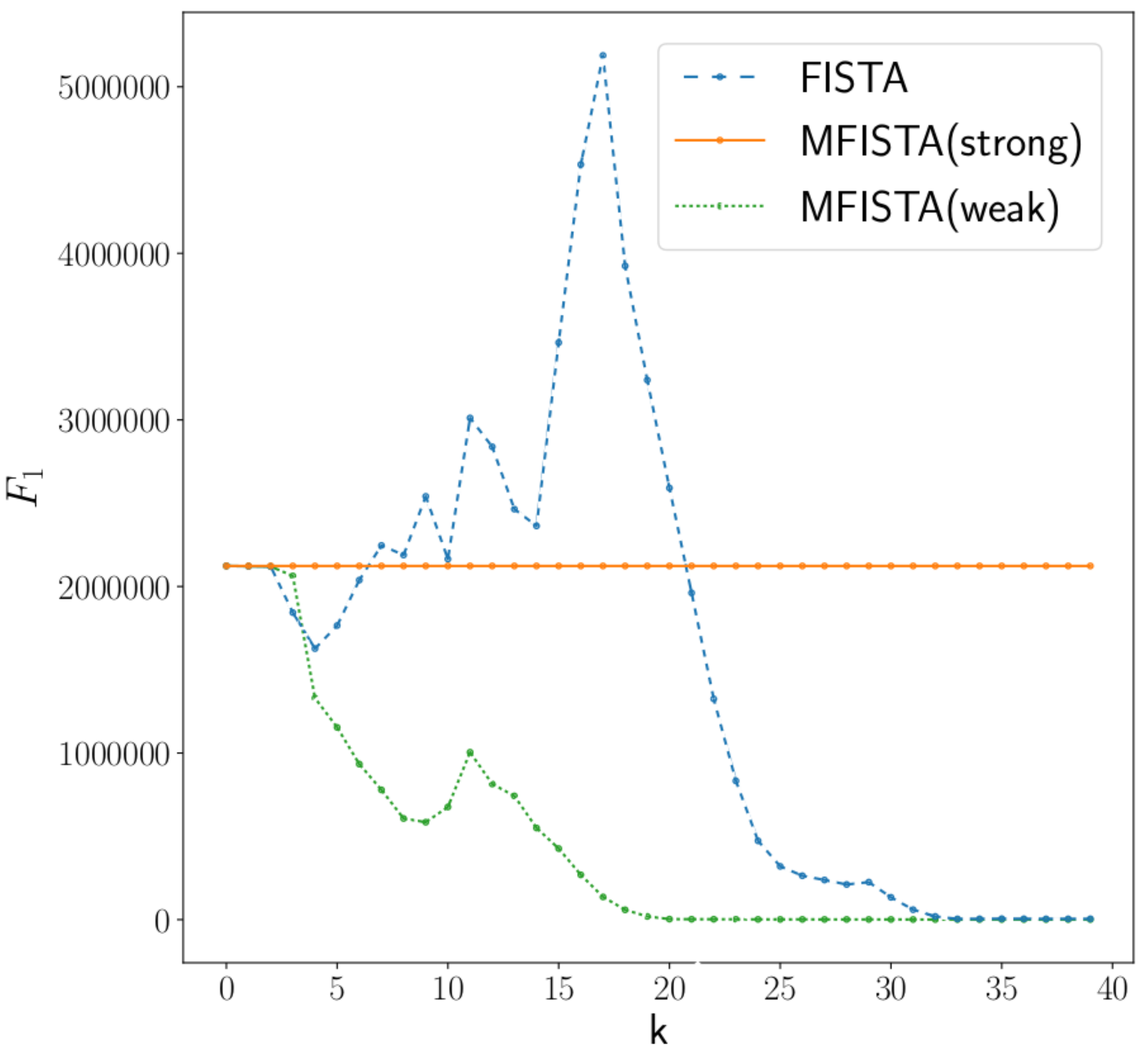}
        $F_1$
      \end{minipage}
      \begin{minipage}[b]{0.33\linewidth}
        \centering
        \includegraphics[keepaspectratio, width=50mm]{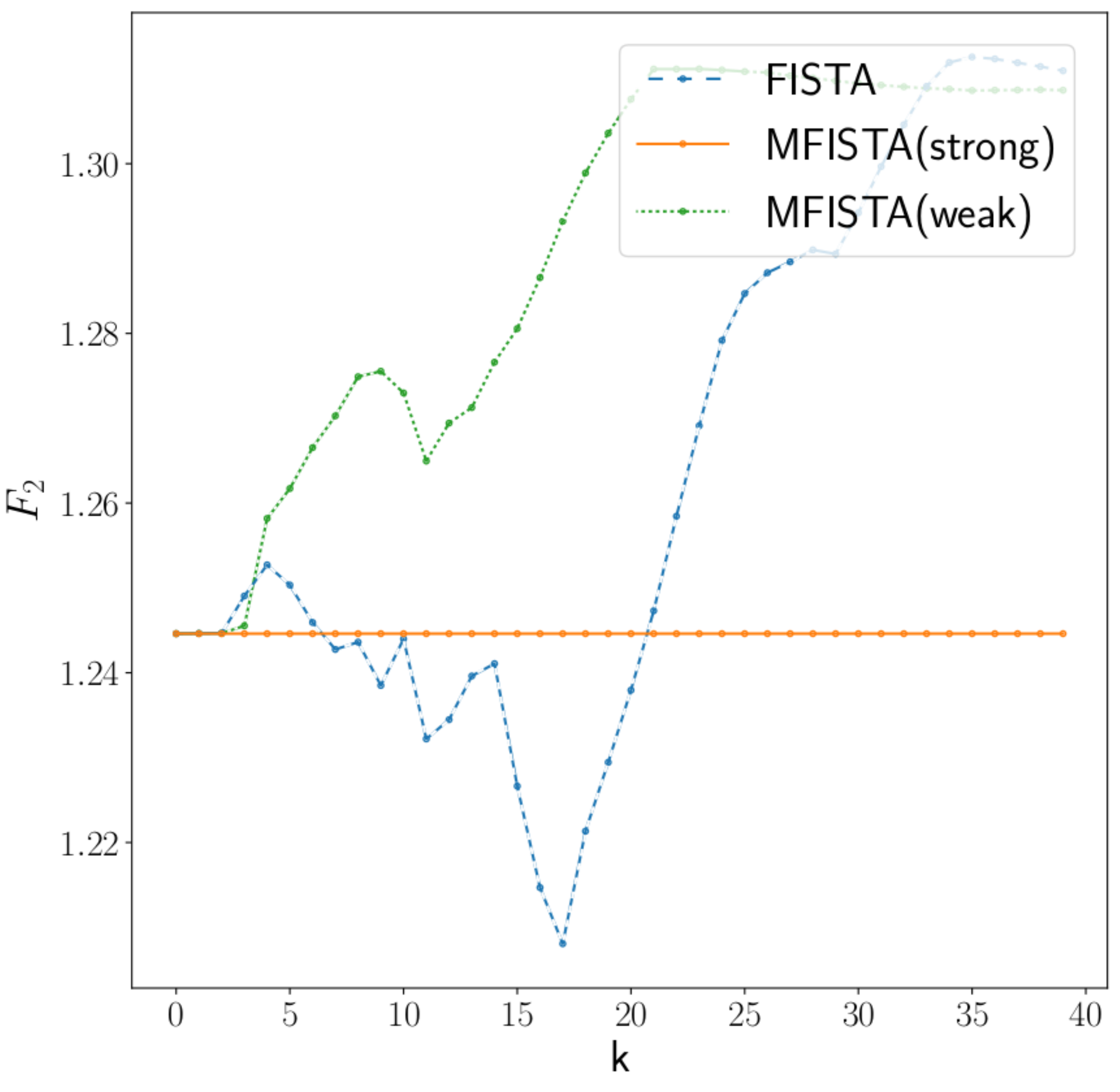}
        $F_2$
      \end{minipage}
    \end{tabular}
    \caption{\red{Functional values for Problem~3}}
    \label{fig: cameraman}
  \end{center}
\end{figure}


\section{Final remarks}
\label{sec:conclusions}

We have proposed an alternative version of the multiobjective
accelerated proximal gradient method, by considering some type of
monotonicity of the objective functions. In particular, imposing a
nonincrease for at least one objective function in each iteration, we
obtain a method that converges globally with rate
$O(1/k^2)$. Moreover, the numerical experiments suggest that the
monotonicity does not interfere in the obtained \red{set of weakly
  Pareto optimal points}, but can be more effective, depending on the
problem. A future work will be to find interesting application
problems, where the method without any monotonicity requirements can
fail. Other restarting techniques should be also studied in the
multiobjective case.\\


\noindent {\bf Acknowledgements.} 
This work was supported by the Grant-in-Aid for
Scientific Research (C) (19K11840 and 21K11769) from Japan Society for
the Promotion of Science.





\printbibliography

\end{document}